%% file: paper.tex
\PassOptionsToPackage{unicode}{hyperref}
\PassOptionsToPackage{hyphens}{url}
\documentclass[
  11pt,
  letterpaper]{siamart251216}
\usepackage{xcolor}
\usepackage[margin=1in]{geometry}
\usepackage{amsmath,amssymb}
\usepackage{iftex}
\ifPDFTeX
  \usepackage[T1]{fontenc}
  \usepackage[utf8]{inputenc}
  \usepackage{textcomp} 
\else 
  \usepackage{unicode-math} 
  \defaultfontfeatures{Scale=MatchLowercase}
  \defaultfontfeatures[\rmfamily]{Ligatures=TeX,Scale=1}
\fi
\usepackage{lmodern}
\ifPDFTeX\else
\fi
\IfFileExists{upquote.sty}{\usepackage{upquote}}{}
\IfFileExists{microtype.sty}{
  \usepackage[]{microtype}
  \UseMicrotypeSet[protrusion]{basicmath} 
}{}
\makeatletter
\@ifundefined{KOMAClassName}{
  \IfFileExists{parskip.sty}{%
    \usepackage{parskip}
  }{
    \setlength{\parindent}{0pt}
    \setlength{\parskip}{6pt plus 2pt minus 1pt}}
}{
  \KOMAoptions{parskip=half}}
\makeatother
\makeatletter
\ifx\paragraph\undefined\else
  \let\oldparagraph\paragraph
  \renewcommand{\paragraph}{
    \@ifstar
      \xxxParagraphStar
      \xxxParagraphNoStar
  }
  \newcommand{\xxxParagraphStar}[1]{\oldparagraph*{#1}\mbox{}}
  \newcommand{\xxxParagraphNoStar}[1]{\oldparagraph{#1}\mbox{}}
\fi
\ifx\subparagraph\undefined\else
  \let\oldsubparagraph\subparagraph
  \renewcommand{\subparagraph}{
    \@ifstar
      \xxxSubParagraphStar
      \xxxSubParagraphNoStar
  }
  \newcommand{\xxxSubParagraphStar}[1]{\oldsubparagraph*{#1}\mbox{}}
  \newcommand{\xxxSubParagraphNoStar}[1]{\oldsubparagraph{#1}\mbox{}}
\fi
\makeatother

\usepackage{longtable,booktabs,array}
\usepackage{calc} 
\usepackage{etoolbox}
\makeatletter
\patchcmd\longtable{\par}{\if@noskipsec\mbox{}\fi\par}{}{}
\makeatother
\IfFileExists{footnotehyper.sty}{\usepackage{footnotehyper}}{\usepackage{footnote}}
\makesavenoteenv{longtable}
\usepackage{graphicx}
\makeatletter
\newsavebox\pandoc@box
\newcommand*\pandocbounded[1]{
  \sbox\pandoc@box{#1}%
  \Gscale@div\@tempa{\textheight}{\dimexpr\ht\pandoc@box+\dp\pandoc@box\relax}%
  \Gscale@div\@tempb{\linewidth}{\wd\pandoc@box}%
  \ifdim\@tempb\p@<\@tempa\p@\let\@tempa\@tempb\fi
  \ifdim\@tempa\p@<\p@\scalebox{\@tempa}{\usebox\pandoc@box}%
  \else\usebox{\pandoc@box}%
  \fi%
}
\def\fps@figure{htbp}
\makeatother

\usepackage[numbers]{natbib}
\usepackage{amsmath}
\usepackage{amssymb}
\usepackage{mathtools}

\usepackage{booktabs}
\usepackage{array}
\usepackage{makecell}
\usepackage{caption}
\usepackage{tikz}
\usetikzlibrary{shapes.geometric, arrows}

\usepackage[ruled,vlined,linesnumbered,algo2e]{algorithm2e}

\usepackage{acro}
\acsetup{
  make-links = false,
  pages/display = first
}
\input{Abbreviations}

\makeatletter
\@ifpackageloaded{caption}{}{\usepackage{caption}}
\AtBeginDocument{%
\ifdefined\contentsname
  \renewcommand*\contentsname{Table of contents}
\else
  \newcommand\contentsname{Table of contents}
\fi
\ifdefined\listfigurename
  \renewcommand*\listfigurename{List of Figures}
\else
  \newcommand\listfigurename{List of Figures}
\fi
\ifdefined\listtablename
  \renewcommand*\listtablename{List of Tables}
\else
  \newcommand\listtablename{List of Tables}
\fi
\ifdefined\figurename
  \renewcommand*\figurename{Figure}
\else
  \newcommand\figurename{Figure}
\fi
\ifdefined\tablename
  \renewcommand*\tablename{Table}
\else
  \newcommand\tablename{Table}
\fi
}
\@ifpackageloaded{float}{}{\usepackage{float}}
\floatstyle{ruled}
\@ifundefined{c@chapter}{\newfloat{codelisting}{h}{lop}}{\newfloat{codelisting}{h}{lop}[chapter]}
\floatname{codelisting}{Listing}

\makeatother
\makeatletter
\@ifpackageloaded{caption}{}{\usepackage{caption}}
\@ifpackageloaded{subcaption}{}{\usepackage{subcaption}}
\makeatother
\usepackage{bookmark}
\IfFileExists{xurl.sty}{\usepackage{xurl}}{} 
\hypersetup{
  pdftitle={Spectral Effects of Heavy-Tailed Vertex Noise in Geometric Graphs},
  pdfauthor={Ben Cardoen; Jeremy Budd; Enrico Amico; Ghassan Hamarneh; Fabian Spill},
  pdfkeywords={spectral graph theory, vertex noise, separability},
  hidelinks,
  pdfcreator={LaTeX via pandoc}}

\title{Spectral Effects of Heavy-Tailed Vertex Noise in Geometric
Graphs}
\author{Ben Cardoen \and Jeremy Budd \and Enrico Amico \and Ghassan
Hamarneh \and Fabian Spill}
\date{2026-04-16}
\headers{Heavy-Tailed Vertex Noise in Geometric Graphs}{Cardoen, Budd, Amico, Hamarneh, and Spill}

\begin{document}
\maketitle
\begin{abstract}
We characterize which local matrix structures saturate Weyl's eigenvalue
perturbation bound for graph Laplacians under geometrically constrained
vertex displacements. Geometric graphs with heavy-tailed vertex noise
arise across sensor networks, biological imaging, and spatial omics, yet
tractable predictions for noise-induced spectral error remain limited.
While Weyl's inequality guarantees eigenvalue continuity, it does not
explain how local geometric perturbations propagate to global spectral
error, nor how tight these bounds are in structured geometric settings.
We study geometric graphs abstracted from biophysical systems,
incorporating clearance, planarity, and identifiability constraints that
govern physically realizable embeddings. Within this constrained
setting, we identify witness motifs, small subgraphs in maximally
noise-sensitive geometric configurations, that dominate weighted-degree
and graph Laplacian spectral perturbations under tempered power-law
vertex displacements. This motif decomposition reduces global spectral
sensitivity to a finite catalog of local extremal structures and
identifies configurations that attain Weyl-tight bounds. We then lift
these constrained-graph results to general straight-line embedded graphs
in arbitrary dimension via local repair operations producing a
constrained surrogate graph that preserves sensitivity-relevant
structure. To quantify noise-induced spectral variation in both
strong-oracle (ground truth known) and weak-oracle (observations only)
regimes, we introduce stochastic co-spectrality (SC) and the stochastic
spectral separation index (S3I), which characterize when observed
spectral distances are noise-driven and when noise parameters are
separable. Together, these results provide a principled pathway from
local geometric noise to global spectral error in graph Laplacian
matrices, enabling estimation of spectral fragility from graph structure
without exhaustive eigenvalue computation or restrictive distributional
assumptions beyond moment bounds.
\end{abstract}

\begin{keywords}
spectral graph theory, vertex noise, geometric graphs, heavy-tailed perturbations, graph Laplacian, separability
\end{keywords}

\begin{MSCcodes}
05C50, 15A18, 60E07, 62R40
\end{MSCcodes}

\section{Introduction}\label{sec-intro}

Weyl's inequality bounds eigenvalue displacement of a Hermitian matrix
under additive perturbation, but does not identify which perturbation
structures attain the bound. For graph Laplacians under geometrically
constrained vertex noise, we show that a finite catalogue of small
subgraph configurations (witness motifs) saturate or nearly saturate
this bound (Table~\ref{tbl-degree-summary}), reducing global spectral
sensitivity analysis to local extremal geometry. Concretely, in many
physical and biological domains, graphs are not static combinatorial
objects but geometric abstractions of spatially embedded systems,
constructed from noisy measurements of vertex positions. In
super-resolution microscopy, for example, structures are inferred from
fluorophore localisations subject to blinking, drift, and finite
localisation precision \citep{Rust2006}. In imaging-based spatial
transcriptomics such as MERFISH, molecules are similarly detected and
localised in situ under microscopy noise and occasional outlier errors
\citep{Chen2015MERFISH}. In sensor networks, node positions are
estimated from imperfect localisation procedures and can exhibit
heterogeneous errors and outliers. In such contexts, spectral invariants
are frequently employed to compare network topologies or classify
structural phenotypes. Because edge weights depend nonlinearly on
inter-vertex distance, vertex-position noise induces correlated,
non-Gaussian perturbations of edge weights and Laplacians that are not
captured by independent edge-noise models. This geometric noise can
compress or inflate spectral distances in a way governed by local
geometric configurations, motivating structure-aware predictions of
spectral error.

While Gaussian models capture small localisation uncertainty, extremal
spectral perturbations in geometric graphs are driven by rare moderate
displacements. Pure power-law models overemphasise these events and
yield infinite-variance regimes incompatible with spectral control. We
therefore adopt a \ac{CTPL} displacement model, that exhibits a
power-law regime for moderate jumps with exponential tempering at large
radii, ensuring finite moments. Our analysis is application-agnostic and
targets the structural mechanisms by which geometric noise propagates
from local configurations to Laplacian spectra. We work in a weak-oracle
setting in which only perturbed vertex positions are observed
(Figure~\ref{fig-graphmodel}), rendering all edge weights random even
when the underlying graph is fixed. Within this setting, we study both
spectral fragility---eigenvalue displacement under noise---and spectral
distinguishability---the ability to separate graphs or noise regimes
using spectral distances. Although the \ac{CTPL} model is defined for
straight-line embeddings in \(\mathbb{R}^d\), all detailed geometric
arguments, motif catalogues, and numerical validations in this paper are
instantiated in two dimensions. This planar setting captures common
spatial networks and enforces a finite, explicitly characterisable
family of witness motifs via packing constraints; higher-dimensional
extensions are discussed only at the level of general formulation.

Classical matrix perturbation theory provides worst-case bounds on
eigenvalue and eigenvector changes under additive perturbations
\citep{StewartSun1990, Bhatia1997}, but these results are largely
distribution-free and do not encode the geometry of vertex noise.
Spectral convergence of graph Laplacians has been rigorously analysed
for random geometric and point-cloud graphs under static, light-tailed
vertex sampling
\citep{Penrose2003Random, Hein2007GraphLaplacians, GarciaTrillosFoCM2020},
yet these frameworks do not address heavy-tailed geometric perturbations
of a fixed underlying graph or link local geometry to global spectral
bounds. Related work on network robustness focuses on percolation, edge
failures, and degree fluctuations
\citep{Barabasi1999Emergence, Holme2017}, but typically treats vertex
positions as fixed or light-tailed latent variables, as in classical
random geometric graph models \citep{Penrose2003Random}.

In this paper, we:

\begin{enumerate}
\def\labelenumi{\alph{enumi}.}
\item
  introduce a geometric vertex-noise model based on \ac{TT} radial
  displacements, implemented via a \ac{CTPL} distribution, and show how
  it induces correlated, non-Gaussian edge-weight perturbations even
  when the underlying graph is fixed.
\item
  derive closed-form and asymptotic bounds for spectral displacement by
  reducing global perturbations to a finite catalogue of extremal
  witness motifs---small, degree-bounded subgraphs whose geometry
  certifies worst-case eigenvalue shifts.
\item
  formalise stochastic co-spectrality and the stochastic spectral
  separation index as observable summaries that quantify when spectral
  distances are noise-dominated and when noise parameters cease to be
  statistically separable.
\item
  prove that straight-line embedded geometric graphs admit bounded-error
  reductions via local repair operations, and establish a
  degree-controlled Frobenius perturbation bound that lifts planar
  motif-based guarantees to unconstrained embeddings.
\end{enumerate}

\section{Methods}\label{sec-methods}

A summary of the principal notation introduced in this section is
collected in Table~\ref{tbl-notation} (Appendix).

\subsection{Definitions, notations, and preliminaries}\label{sec-defs}

A weighted undirected graph with a straight-line embedding in
\(\mathbb{R}^d\) is a triple \(G = (V, E, W)\), where
\(V = \{v_1,\ldots,v_n\}\) is a finite vertex set,
\(E \subseteq \{\{v_i,v_j\} \mid i \neq j\}\) is a set of undirected
edges, and \(W : E \to \mathbb{R}_{>0}\) assigns positive weights to
edges. Each vertex \(v_i\) is associated with a coordinate
\(x_i \in \mathbb{R}^d\), inducing a straight-line embedding of \(G\).
The weighted adjacency matrix \(A \in \mathbb{R}^{n \times n}\) is
defined by \(A_{ij} = W(\{v_i,v_j\})\) if \(\{v_i,v_j\} \in E\) and
\(A_{ij} = 0\) otherwise, and the weighted degree matrix \(D\) is
diagonal with entries \(D_{ii} = \sum_j A_{ij}\). We use the
combinatorial Laplacian \(L = D - A\), whose eigenvalues
\(0 = \lambda_1 \le \lambda_2 \le \dots \le \lambda_n\) are real. To
quantify spectral differences between graphs, we use the one-dimensional
Wasserstein--2 distance. Given eigenvalue sets
\(\boldsymbol{\lambda}=\{\lambda_1,\ldots,\lambda_n\}\) and
\(\boldsymbol{\lambda}'=\{\lambda'_1,\ldots,\lambda'_m\}\), define the
empirical spectral measures
\(\mu_G = \frac{1}{n}\sum_{i=1}^n \delta_{\lambda_i}\) and
\(\mu_{G'} = \frac{1}{m}\sum_{j=1}^m \delta_{\lambda'_j}\). In one
dimension, the squared Wasserstein--2 distance admits the explicit form
\(W_2^2(\mu_G,\mu_{G'}) = \int_0^1 \bigl(F_G^{-1}(t)-F_{G'}^{-1}(t)\bigr)^2\,dt\),
where \(F_G^{-1}\) and \(F_{G'}^{-1}\) denote the quantile
functions\citep{santambrogio2015optimal}; equivalently,
\(W_2^2(\mu_G,\mu_{G'}) = \sum_{\ell=1}^L \omega_\ell (x_\ell-y_\ell)^2\)
with weights \(\omega_\ell\) summing to one and \(x_\ell,y_\ell\) drawn
from the sorted spectra. The spectral distance used throughout the paper
is the normalized Wasserstein distance
\begin{equation}\phantomsection\label{eq-w2-practical}{
d_{\mathrm{SD}}(G,G')
= \frac{1}{\lambda_{\mathrm{ref}}} W_2(\mu_G,\mu_{G'}), 
\qquad
\lambda_{\mathrm{ref}}
= \max\bigl(\|\boldsymbol{\lambda}\|_\infty,\|\boldsymbol{\lambda}'\|_\infty\bigr).
}\end{equation} Because \(\lambda_{\mathrm{ref}}\) depends on both the
unperturbed and perturbed spectra, it can be affected by noise where
that noise alters the maximal weighted degree. No single normalization
scheme is universally optimal: alternatives such as normalizing by the
unperturbed \(\|\boldsymbol{\lambda}\|_\infty\) alone avoid stochastic
denominators but sacrifice symmetry, while fixed scaling constants lose
adaptivity to graph size. The choice of \(\lambda_{\mathrm{ref}}\) does
not affect the theoretical bounds or proofs, which operate on
unnormalized spectral shifts; it enters only in the empirical metrics
(SC, S3I). The accompanying code permits users to substitute
domain-specific normalization as needed.

\subsection{Tempered Lévy Distribution}\label{sec-tl-distribution}

To study heavy tail noise, we need a carefully selected class of
distributions. The classical Lévy distribution is a special case of the
stable distribution family with stability parameter \(\alpha = 1/2\) and
skewness parameter \(\beta = 1\). Stable distributions with
\(\alpha < 2\) exhibit power-law tails and are often called Lévy
\(\alpha\)-stable distributions. In this work, we model vertex
perturbations using an \ac{CTPL} distribution, which modifies a
power-law tail by exponential decay (Figure~\ref{fig-tl}, Panel A). This
distribution belongs to the class of tempered stable
distributions\citep{Rosinski2007} but avoids the sequential-process
connotation of ``Lévy process.''

\subsubsection{Base Lévy--Stable Power-Law Kernel
(1D)}\label{sec-levy-base}

We model jump magnitudes using the standard one-dimensional Lévy
distribution with location \(\mu\) and scale \(c>0\). Its probability
density function on \((\mu,\infty)\) is \[
f_{\text{Levy}}(x; \mu, c)
= \sqrt{\frac{c}{2\pi}}\,(x-\mu)^{-3/2}
  \exp\!\left(-\frac{c}{2(x-\mu)}\right),
\qquad x>\mu.
\] This is a proper probability density with heavy power-law-like tails
and infinite variance in its untempered form. In what follows, this
Lévy--stable law serves solely as a canonical power-law kernel: we use
it to define a single-step radial displacement model and do not invoke
any Lévy flight or process structure.

\subsubsection{Calibrated Exponentially Tempered Power Law
(CTPL)}\label{sec-ctpl}

To ensure the mathematical tractability of spectral observables and the
existence of moments, we apply exponential tempering to the Lévy
density. Exponentially tempered and truncated Lévy laws are well
established as a way to retain Lévy-like jump behaviour while enforcing
finite moments and tail control (e.g.
\citep{Koponen1995, Rosinski2007, ContTankov2004}). To avoid any
ambiguity in terminology and interpretation, although the untempered
kernel coincides with the \(\alpha=1/2\) Lévy--stable law, we use CTPL
strictly as a single-step radial displacement model and do not assume
any Lévy flight or process structure. For parameters \(c>0\),
\(\lambda\ge 0\), and location \(\mu\), define the tempered power-law
pdf by \[
\tilde{f}_{\text{temp}}(x; c, \lambda, \mu)
= f_{\text{Levy}}(x; \mu, c)\, e^{-\lambda x}, \qquad x>\mu,
\] and let \[
Z(c,\lambda,\mu)
= \int_{\mu}^{\infty} f_{\text{Levy}}(x; \mu, c)\, e^{-\lambda x}\,dx
< \infty
\] denote the normalising constant. The corresponding calibrated
tempered Lévy (CTPL) density is \[
f_{\text{CTPL}}(x; c, \lambda, \mu)
= \frac{1}{Z(c,\lambda,\mu)}\, f_{\text{Levy}}(x; \mu, c)\, e^{-\lambda x},
\qquad x>\mu.
\] Exponential tempering ensures the existence of all moments by
enforcing rapid decay at infinity, while the Lévy kernel's exponential
factor controls the singularity as \(x \to \mu^+\); together these imply
that \(Z(c,\lambda,\mu) < \infty\) and that all moments of
\(f_{\text{CTPL}}\) exist, while the Lévy core retains a heavy-tail-like
regime for moderate jump sizes. The parameter \(c\) controls the
power-law regime, while \(\lambda\) controls the exponential tempering.
The 2D isotropic perturbation model is
\(\boldsymbol{\xi} = R \cdot (\cos \Theta, \sin \Theta)\), where
\(R \sim \text{CTPL}(c, \lambda)\) and
\(\Theta \sim \text{Uniform}(0, 2\pi)\) independently. For
\(r \ll 1/\lambda\), the \ac{CTPL} density retains a power-law core with
exponent \(-3/2\) (up to multiplicative constants), so large
displacements are substantially more probable than under light-tailed
models such as Gaussian noise. For \(r \gg 1/\lambda\), exponential
tempering dominates and \(f_{\ac{CTPL}}(r) \sim e^{-\lambda r}\),
ensuring a well-defined tempered regime. As a consequence of exponential
tempering, all moments of the \ac{CTPL} law exist, guaranteeing that
spectral observables such as \ac{SC} and \ac{S3I} are well defined. In
the calibrated pre-tempering regime, this power-law core yields higher
tail probabilities than Gaussian or Gamma alternatives with exponential
decay (Figure~\ref{fig-tl}, Panel B), inducing heavy-tailed edge-weight
noise that cannot be captured by exponentially bounded models. In
applications, extremely large jumps are biophysically implausible and
also problematic for identifiability. We therefore work with a
calibrated and clipped version of the CTPL law. Given a maximum
admissible jump radius \(r_{\max}>0\) and a tail tolerance
\(\delta\in(0,1)\), we numerically tune the tempering parameter
\(\lambda=\lambda(c,r_{\max},\delta)\) so that the untruncated \ac{CTPL}
law satisfies \(\Pr[X>r_{\max}] \le \delta\) for
\(X\sim\mathrm{CTPL}(c,\lambda,\mu)\) (Figure~\ref{fig-tl}, Panel C).
Sampling is performed from the corresponding tempered power-law density
and deterministically clipped at \(r_{\max}\) (Algorithms
\ref{alg:sample-tl} and \ref{alg:autotune-lambda}), so that the law on
\([0,r_{\max})\) is unchanged and clipping concentrates at most
\(\delta\) of the total probability mass into an atom at \(r_{\max}\);
since \(\lambda\) is tuned to enforce \(\Pr[X > r_{\max}] \le \delta\),
this atom is controlled by the calibration tolerance. All analytic
arguments in this paper are restricted to displacements of magnitude at
most \(r_{\max}\). Although \(\lambda\) is calibrated only to control
the extreme tail beyond \(r_{\max}\), varying the Lévy scale parameter
\(c\) induces genuinely different behaviour in the core and
moderate-tail regimes, so the calibrated CTPL family does not collapse
to a single effective noise model.

\begin{figure}[htbp]

\centering{

\pandocbounded{\includegraphics[keepaspectratio]{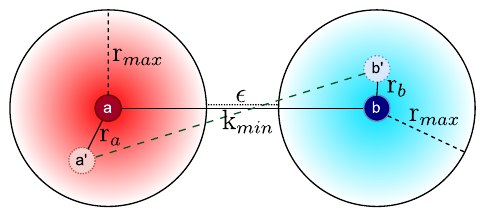}}

}

\caption{\label{fig-graphmodel}Schematic illustration of the vertex
noise model (see Section~\ref{sec-graphmodel} for full assumptions
A1--A4). Two vertices \(a\) and \(b\) have latent (noise-free) positions
separated by the true edge length \(k\) (strong oracle, solid line),
where \(k\) denotes the ground-truth length of a given edge and
satisfies \(k \ge k_{\min}=2r_{\max}+\varepsilon\); the lower bound
\(k_{\min}\) is a model constraint ensuring that perturbation balls
remain \(\varepsilon\)-separated so that vertices stay identifiable
under noise. Each vertex is independently displaced by a radial jump
\(r_a\), \(r_b\) drawn from a \ac{CTPL} distribution with uniformly
sampled direction, clipped at \(r_{\max}\); the perturbed coordinate
model is given in Section~\ref{sec-ctpl}. The observed edge length (weak
oracle, dashed green line) is computed from the perturbed positions
\(a'\), \(b'\).}

\end{figure}%

\begin{figure}[htbp]

\centering{

\includegraphics[width=\textwidth,height=\textheight,keepaspectratio]{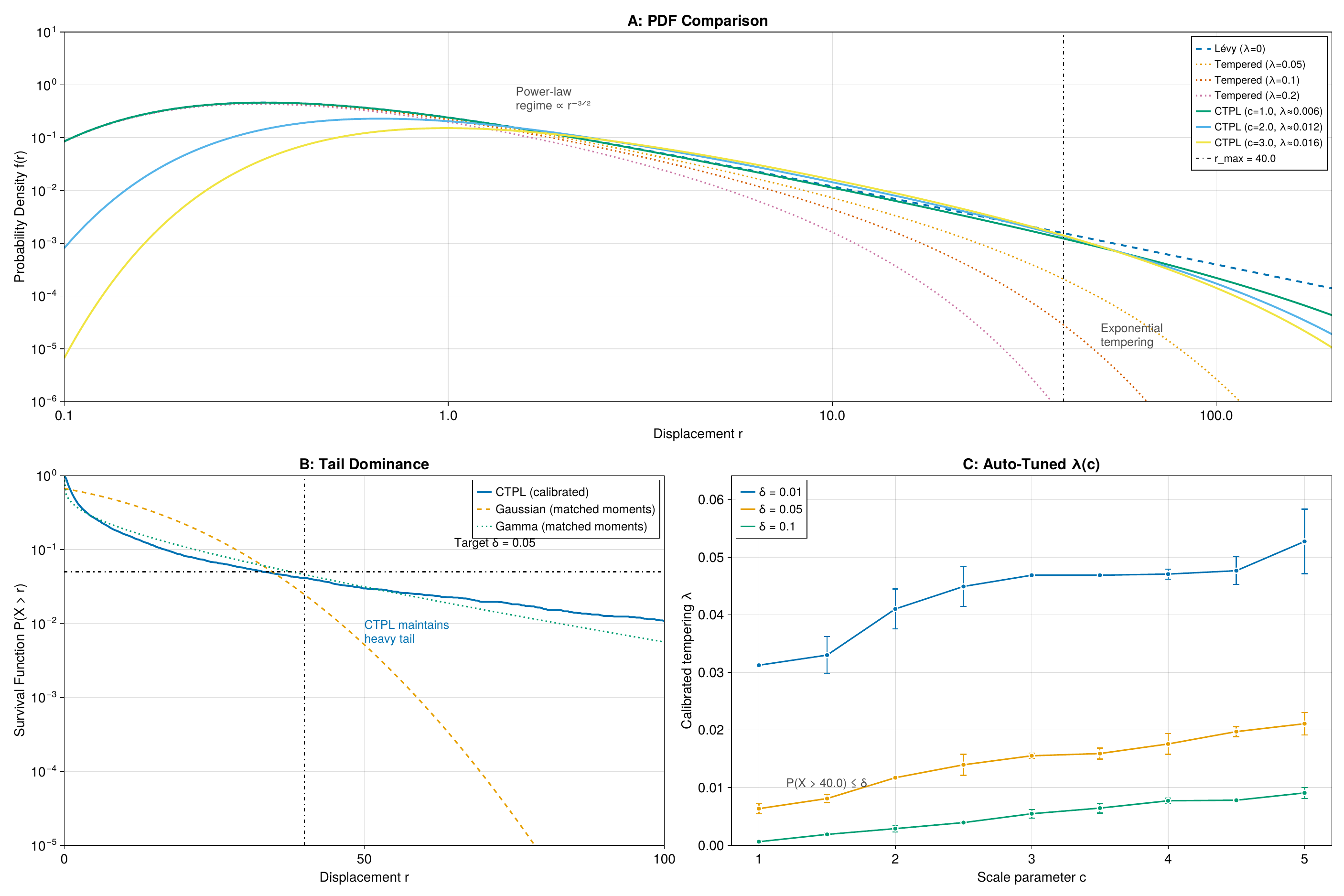}

}

\caption{\label{fig-tl}From Lévy noise to calibrated tempered power-law
(CTPL) noise. \textbf{(A)} Probability density functions showing the
effect of exponential tempering \(\lambda\) on a Lévy distribution,
preserving a power-law regime while suppressing extreme displacements
beyond a clipping scale. \textbf{(B)} Survival functions demonstrate
that the calibrated CTPL distribution retains tail dominance relative to
moment-matched Gaussian (\(\mu_G = \hat{E}[X_{\mathrm{CTPL}}]\),
\(\sigma^2 = \widehat{\mathrm{Var}}[X_{\mathrm{CTPL}}]\)) and Gamma
(\(\alpha = \hat{E}[X]^2 / \widehat{\mathrm{Var}}[X]\),
\(\theta = \widehat{\mathrm{Var}}[X] / \hat{E}[X]\)) alternatives under
a fixed tail probability constraint. \textbf{(C)} Empirical auto-tuning
of \(\lambda\) as a function of scale parameter \(c\), obtained via
rejection sampling to enforce \(P(X > r_{\max}) \le \delta\).}

\end{figure}%

We distinguish a strong oracle, which has access to the noise-free
vertex positions and true edge weights, from a weak oracle, which
observes only perturbed vertex positions and hence noisy edge weights
(Figure~\ref{fig-graphmodel}).

\subsection{Geometric Vertex-Noise Graph Model}\label{sec-graphmodel}

We consider straight-line embeddings \(x:V\to\mathbb{R}^d\) of graphs
\(G=(V,E)\) under the following geometric and noise assumptions.

(A1) \(\varepsilon\)-thick embedding (clearance). All non-incident
primitives are \(\varepsilon\)-separated; equivalently, the
\(\varepsilon/2\)-balls around vertices and \(\varepsilon/2\)-radius
tubes around edges are pairwise disjoint. Formally,
\begin{equation}\phantomsection\label{eq-thickness}{
\begin{aligned}
\min\{\,&\mathrm{dist}(x(u),x(v)),\ \mathrm{dist}(x(u),[x(a),x(b)]), \\
&\mathrm{dist}([x(a),x(b)],[x(c),x(d)])\} \ \ge \ \varepsilon,
\end{aligned}
}\end{equation} for all distinct \(u,v\in V\) and all edges
\(\{a,b\},\{c,d\}\in E\) with no shared endpoints. This generalizes the
thick embedding constraint studied in geometric graph theory
\citep{KolmogorovBarzdin1967, GromovGuth2012, BarrettEtAl2023} and
relates to Federer's notion of positive reach \citep{Federer1959} from
geometric measure theory.

(A2) Identifiability scale. Let \(k\) denote the ground-truth length of
a given edge (Figure~\ref{fig-graphmodel}), with
\(k \ge k_{\min} = 2r_{\max}+\varepsilon\); this lower bound ensures
that perturbation balls of radius \(r_{\max}\) remain
\(\varepsilon\)-separated, so that vertices stay identifiable under
noise. More generally, every pair of distinct vertices satisfies
\(\|x_u-x_v\|\ge k_{\min}\), whether or not \(\{u,v\}\in E\); this
guarantees that all vertices remain identifiable after perturbation.
Neighbours of a vertex lie on (or outside) the \(k\)-sphere and must be
pairwise \(\varepsilon\)-separated. Writing
\begin{equation}\phantomsection\label{eq-theta}{
\theta = 2\arcsin\left(\frac{\varepsilon}{2k}\right),
}\end{equation} the maximum degree is bounded by the spherical packing
number at angular separation \(\theta\) \citep{ConwaySloane1999}.

(A3) \ac{CTPL} vertex noise. Vertices are perturbed independently by an
isotropic \ac{CTPL} radial law as defined in Section~\ref{sec-ctpl}.
Each displacement has the form \(\boldsymbol{\eta} = R\,\mathbf{U}\)
(Figure~\ref{fig-graphmodel}), where
\(R \sim \text{\ac{CTPL}}(c,\lambda,\mu)\) and \(\mathbf{U}\) is
uniformly distributed on the unit sphere \(S^{d-1}\). The tempering
parameter \(\lambda=\lambda(c,r_{\max},\delta)\) is calibrated so that
\(\Pr[R>r_{\max}]\le\delta\), and all displacements are truncated to
satisfy \(R\le r_{\max}\).

(A4) Weight regularity. Edge weights are bounded and Lipschitz in the
inter-vertex distance \(\rho\): there exist \(w_{\max},L_w<\infty\) such
that \begin{equation}\phantomsection\label{eq-weight-lipschitz}{
0\le w(\rho)\le w_{\max}, \qquad |w(\rho+\Delta \rho)-w(\rho)|\le L_w\,|\Delta \rho|.
}\end{equation}

\emph{Remark.} The extremality results in Section~\ref{sec-degreeproof}
additionally require that \(\varphi\) is nondecreasing and convex; all
other results (SC, S3I, repair bounds) use only (A1)--(A4) as stated.

Under (A1)--(A3), per-edge changes \(|\delta w|\) are bounded, so
per-motif perturbations admit finite operator/Frobenius envelopes.

If the ambient metric has finite doubling dimension (every ball of
radius \(r\) is coverable by \(N_D\) balls of radius \(r/2\)), then
packing on the \(k\)-sphere obeys
\begin{equation}\phantomsection\label{eq-doubling-pack}{
\mathrm{Pack}\big(S^{d-1}(k),\varepsilon\big) \ \le\ C\left(\frac{k}{\varepsilon}\right)^{\log_2 N_D - 1},
}\end{equation} yielding a finite degree bound \citep{Heinonen2001}.

For the remainder of this paper we restrict to displacement metrics
\(d(x,y) = \|x-y\|\) induced by a norm \(\|\cdot\|\) on
\(\mathbb{R}^d\). Such metrics are convex and monotone nondecreasing in
displacement magnitude, the two properties required by the perturbation
and extremal arguments in Section~\ref{sec-degreeproof} and
Section~\ref{sec-motif-packing}. All \(\ell^p\)-norms (\(p \ge 1\)) are
admissible; non-Euclidean choices are angularly preferential and may
therefore induce norm-dependent extremal configurations.

\subsection{\texorpdfstring{Bounds on Spectral Change in \ac{CTPL}
Graphs}{Bounds on Spectral Change in  Graphs}}\label{sec-spectral-bounds}

The perturbation of vertices in a graph \(G\) leads to a perturbed
adjacency matrix \(A'\) and a corresponding change in the spectrum.
Characterizing this change is central to understanding the graph's
fragility. Under light-tailed noise (e.g.~Gaussian), extreme
displacements are exponentially unlikely; vertex perturbations can be
effectively concentrated within one standard deviation, reducing
correlated vertex noise to near-independent edge noise for which Weyl's
inequality yields tight, predictable bounds. Heavy-tailed \ac{CTPL}
noise changes this picture qualitatively. Because the extremal analysis
is parametric in the displacement radius, all bounds and motif
catalogues can be re-evaluated at any \(r \le r_{\max}\). Under
\ac{CTPL}, the power-law core assigns non-negligible probability mass to
displacement values well into the tail
range---e.g.~\(0.75\,r_{\max}\)---at probabilities far exceeding the
calibration cutoff \(\delta\). Re-computing the spectral bounds at these
sub-maximal but CTPL-probable displacements yields a continuum of
practically relevant perturbation estimates, not merely a single
worst-case number. Under Gaussian noise, this same range is effectively
empty, collapsing the analysis to the near-independent-edge regime.
Moreover, even moderate tail values suffice to break the
independent-edge-noise reduction, motivating the motif-level analysis
that follows. The spectral bounds themselves are deterministic: they
hold for any displacement bounded by \(r_{\max}\), regardless of the
generating distribution. In this section, we bound this change using two
complementary tools from matrix analysis: Cauchy interlacing is used to
derive deterministic lower bounds from locally perturbed induced
subgraphs \citep{Haemers1995, Chung1997Spectral}, while Weyl-type
perturbation bounds provide corresponding upper bounds on global
spectral variation \citep{Weyl1912, StewartSun1990, Bhatia1997}.

\subsubsection{Lower Bound on Perturbed Eigenvalues via Cauchy
Interlacing}\label{lower-bound-on-perturbed-eigenvalues-via-cauchy-interlacing}

We rely on the classical Cauchy interlacing theorem for eigenvalues of
principal submatrices \citep{Haemers1995, Chung1997Spectral}, which we
apply here to derive a lower bound on the spectral effect of localized
vertex perturbations.

\begin{theorem}[Induced-Subgraph Lower Bound for Perturbed Laplacians]\label{thm:induced-lower-bound}
Let $G'$ be a graph obtained from $G$ by vertex perturbations, and let $H' = G'[V_{H'}]$ be an induced subgraph on a vertex subset $V_{H'} \subseteq V(G')$.
Then the largest eigenvalue of the combinatorial Laplacian satisfies
$$
\lambda_n(L_{G'}) \;\ge\; \lambda_{\max}(L_{H'}).
$$
\end{theorem}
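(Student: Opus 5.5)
The plan is to compare quadratic forms. The key observation is that $L_{H'}$ is \emph{not} literally a principal submatrix of $L_{G'}$, because degrees in $H'$ count only edges inside $V_{H'}$. So a naive application of Cauchy interlacing has to be corrected by a diagonal term. Write $S = V_{H'}$ and let $L_{G'}[S]$ denote the principal submatrix of $L_{G'}$ indexed by $S$. Its off-diagonal entries are $-w'_{ij}$ for $i,j\in S$, which coincide with those of $L_{H'}$ since $H'$ is induced. Its diagonal entries are the full weighted degrees $D'_{ii} = \sum_{j\in S} w'_{ij} + \sum_{j\notin S} w'_{ij}$. Hence
\[
L_{G'}[S] \;=\; L_{H'} + D_{\mathrm{ext}}, \qquad (D_{\mathrm{ext}})_{ii} = \sum_{j\notin S} w'_{ij} \;\ge\; 0,
\]
where nonnegativity uses that perturbed edge weights remain nonnegative by (A4).

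The proof then proceeds in two steps. First, Cauchy interlacing for the real symmetric matrix $L_{G'}$ and its principal submatrix $L_{G'}[S]$ gives $\lambda_n(L_{G'}) \ge \lambda_{\max}(L_{G'}[S])$. Second, since $D_{\mathrm{ext}} \succeq 0$, Weyl's monotonicity (or directly the Courant--Fischer characterization) gives $\lambda_{\max}(L_{H'} + D_{\mathrm{ext}}) \ge \lambda_{\max}(L_{H'})$. Chaining the two inequalities yields the claim.

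As a self-contained alternative, I would also record the one-line Rayleigh-quotient argument. Take a unit eigenvector $y$ of $L_{H'}$ for $\lambda_{\max}(L_{H'})$, and extend it by zeros to $x\in\mathbb{R}^n$. Then
\[
x^\top L_{G'} x = \sum_{\{i,j\}\in E'} w'_{ij}(x_i-x_j)^2 = \sum_{\{i,j\}\subseteq S} w'_{ij}(y_i-y_j)^2 + \sum_{i\in S,\, j\notin S} w'_{ij}\, y_i^2 \;\ge\; y^\top L_{H'} y.
\]
Combined with $\lambda_n(L_{G'}) = \max_{\|x\|=1} x^\top L_{G'} x$, this gives the result.

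The only genuine subtlety, and the step I expect to need care, is the mismatch between $L_{H'}$ and the principal submatrix, that is, correctly accounting for the boundary-edge contributions $D_{\mathrm{ext}}$ and their sign. This is where nonnegativity of the perturbed weights is essential. No geometric assumption beyond $w\ge 0$ is used, so the bound holds deterministically for every realization of the vertex noise.
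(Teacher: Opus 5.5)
Your proposal is correct and follows the paper's proof. The paper likewise writes the principal submatrix $L_{G'}[V_{H'},V_{H'}]$ as $L_{H'}$ plus a nonnegative diagonal boundary-degree term $\Delta_{H'}$, applies Cauchy interlacing, and then uses monotonicity of the top eigenvalue under positive semidefinite perturbation; your Rayleigh-quotient variant compresses the same argument into one line, and your explicit appeal to nonnegative perturbed weights (A4) justifies $\Delta_{H'}\succeq 0$, which the paper only asserts.
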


\begin{proof}
Let $L_{G'}[V_{H'},V_{H'}]$ denote the principal submatrix of $L_{G'}$ indexed by $V_{H'}$.
This matrix decomposes as
$$
L_{G'}[V_{H'},V_{H'}] = L_{H'} + \Delta_{H'},
\qquad
\Delta_{H'} := \operatorname{diag}\bigl(d_{G'}(v)-d_{H'}(v)\bigr)_{v\in V_{H'}} \succeq 0,
$$
where $\Delta_{H'}$ collects degree contributions from edges incident to $V_{H'}$ but not contained in $H'$.
By Cauchy interlacing, the largest eigenvalue of $L_{G'}$ satisfies
$$
\lambda_n(L_{G'}) \;\ge\; \mu_m\!\bigl(L_{G'}[V_{H'},V_{H'}]\bigr),
$$
where $\mu_m(\cdot)$ denotes the largest eigenvalue of the principal submatrix.
Since $L_{H'} = L_{G'}[V_{H'},V_{H'}] - \Delta_{H'}$ with $\Delta_{H'} \succeq 0$, monotonicity of eigenvalues under positive semidefinite perturbations yields
$$
\mu_m\!\bigl(L_{G'}[V_{H'},V_{H'}]\bigr) \;\ge\; \lambda_{\max}(L_{H'}).
$$
\end{proof}

This induced-subgraph bound can be conservative when the witness
subgraph omits high-degree connections to the remainder of the graph. In
such cases, complementary upper bounds based on operator-norm
perturbation theory, such as Weyl-type inequalities and
Hoffman--Wielandt bounds
\citep{Weyl1912, StewartSun1990, HoffmanWielandt1953}, may be tighter.

\subsubsection{Upper Bound on Eigenvalue Change via Weyl-Type
Perturbation
Bounds}\label{upper-bound-on-eigenvalue-change-via-weyl-type-perturbation-bounds}

We now derive a global upper bound on Laplacian eigenvalue displacement
under vertex perturbations. This bound follows directly from Weyl's
inequality for Hermitian matrix perturbations and complements the
induced-subgraph lower bounds of the previous section.

\begin{theorem}[Weyl-Type Upper Bound for Laplacian Perturbations]\label{thm:weyl-upper-bound}
Let $G$ be a graph with combinatorial Laplacian $L_G$, and let $G'$ be obtained from $G$ by vertex perturbations, inducing a Laplacian perturbation
$$
E := L_{G'} - L_G .
$$
Then, for each $k=1,\dots,n$,
\begin{equation}\label{eq:weyl-bound}
\bigl|\lambda_k(L_{G'}) - \lambda_k(L_G)\bigr|
\;\le\; \|E\|_2
\;\le\; \|E\|_F .
\end{equation}
\end{theorem}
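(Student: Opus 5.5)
The plan is to apply Weyl's inequality directly, after checking that all matrices involved are real symmetric. Both $L_G$ and $L_{G'}$ are combinatorial Laplacians of weighted undirected graphs on the same vertex set $V$. A vertex perturbation moves coordinates but does not relabel or remove vertices; under (A2) all vertices remain identifiable, so the two Laplacians are $n\times n$ matrices indexed consistently. Each is real symmetric, since $A_{ij}=A_{ji}$ and $D$ is diagonal. Hence $E = L_{G'}-L_G$ is real symmetric, and both spectra can be ordered nondecreasingly as in Section~\ref{sec-defs}.

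The first inequality will come from the Courant--Fischer min--max characterization. For Hermitian $M$ we have
\[
\lambda_k(M)=\min_{\dim S=n-k+1}\,\max_{x\in S,\ \|x\|=1} x^\top M x .
\]
For unit $x$, $x^\top E x$ lies in $[\lambda_1(E),\lambda_n(E)]\subseteq[-\|E\|_2,\|E\|_2]$. Therefore
\[
x^\top L_G x-\|E\|_2 \;\le\; x^\top L_{G'}x \;\le\; x^\top L_G x+\|E\|_2
\]
uniformly in $x$. Taking max over $S$ and then min over subspaces preserves these uniform shifts, which gives $\lambda_k(L_G)-\|E\|_2\le\lambda_k(L_{G'})\le\lambda_k(L_G)+\|E\|_2$. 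This is exactly $|\lambda_k(L_{G'})-\lambda_k(L_G)|\le\|E\|_2$. Alternatively, we can simply cite Weyl's inequality \citep{Weyl1912, Bhatia1997}.

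The second inequality is the standard norm comparison. Since $E$ is symmetric, $\|E\|_2=\max_i|\lambda_i(E)|$ and $\|E\|_F^2=\operatorname{tr}(E^\top E)=\sum_i\lambda_i(E)^2$. The maximum of $|\lambda_i(E)|$ is at most the square root of the sum of squares, so $\|E\|_2\le\|E\|_F$.

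No step is a serious obstacle. The only point needing care is the index-consistency remark in the first paragraph. If edges could appear or disappear under perturbation, we would set the weight to $0$ on absent edges. The Laplacians then remain $n\times n$ symmetric matrices on a common index set, and the argument is unchanged. We do not need finiteness of $\|E\|_F$ for the inequality itself, but it follows from (A4) and the bounded displacements of (A3).
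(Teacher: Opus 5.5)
Your proposal is correct and follows the paper's proof, which likewise applies Weyl's inequality to the symmetric matrices $L_G, L_{G'}$ and then uses the standard comparison $\|E\|_2\le\|E\|_F$; your Courant--Fischer argument just proves the cited inequality. One harmless slip: under the paper's ascending ordering, $\min_{\dim S=n-k+1}\max_{x\in S,\|x\|=1}x^\top Mx$ equals $\lambda_{n-k+1}(M)$ rather than $\lambda_k(M)$ (the correct form is $\min_{\dim S=k}\max$), but since you apply the same characterization to both Laplacians, the conclusion for every index is unaffected.
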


\begin{proof}
Since $L_G$ and $L_{G'}$ are symmetric, Weyl's inequality for Hermitian matrices \cite{Weyl1912,StewartSun1990,Bhatia1997} implies that
$$
\bigl|\lambda_k(L_{G'}) - \lambda_k(L_G)\bigr| \le \|E\|_2
$$
for all $k$.
The second inequality follows from the standard bound $\|E\|_2 \le \|E\|_F$.
\end{proof}

This bound is global and sharp in principle, but it does not identify
which local perturbation patterns realise worst-case spectral
displacement. In particular, Weyl-type bounds obscure the role of
localized geometric configurations, motivating the motif-based and
subgraph-level analyses developed in the following sections.

\subsection{\texorpdfstring{The Correspondence between \ac{CTPL} Vertex
and Edge Noise
Distributions}{The Correspondence between  Vertex and Edge Noise Distributions}}\label{sec-tailweights}

Most existing models of noise on graphs act directly on edge weights,
typically treating perturbations as independent or weakly dependent
across edges, particularly in graph similarity and dynamic network
settings \citep{donnat2018tracking, Koutra2016DeltaCon}. If vertex-level
\ac{CTPL} noise reduced to a simple, tractable edge-noise law, spectral
fragility under vertex perturbations could be analysed directly within
these frameworks. In general, however, vertex noise induces edge-weight
displacements that are nonlinear, correlated, and geometry-dependent.
Understanding this distinction is essential, as it motivates the local,
motif-based analysis developed in the remainder of this section.
Consider a single edge \(\{a,b\}\) embedded in \(\mathbb{R}^2\) with
\(\|a-b\| = k\), and let \(a', b'\) be obtained by independent vertex
perturbations as in Assumption (A3). Write \(\eta_a = R_a U_a\) and
\(\eta_b = R_b U_b\), where \(R_a,R_b \sim \text{CTPL}(c,\lambda,\mu)\)
are independent radial jump magnitudes and \(U_a,U_b\) are independent
unit vectors with uniform directions. The noise-perturbed edge length is
\(k' = \|a' - b'\| = \| (a + \eta_a) - (b + \eta_b) \|,\) and the
induced edge-length perturbation is \(E = k' - k\). In polar
coordinates, \(E\) is a nonlinear function of the triple
\((R_a,R_b,\phi)\), where \(\phi\) denotes the relative angle between
the two perturbations. As a consequence, the induced edge-noise
distribution arises as a nontrivial pushforward of the joint
vertex-noise law and reflects geometric cancellation and reinforcement
effects. Even in this minimal setting, the resulting edge perturbation
is neither independent nor expressible in any simple, vertex-agnostic
parametric edge-noise model. For multi-edge configurations
(Section~\ref{sec-vn-noise-results}), shared vertices introduce strong
correlations between adjacent edge perturbations. These dependencies
persist under \ac{CTPL} vertex noise and are clearly visible
empirically, even in small geometric motifs
(Figure~\ref{fig-vertex-noise-is-not-edge-noise}). As a consequence,
vertex noise cannot, in general, be faithfully approximated by
independent or identically distributed edge-noise models. This
observation renders motif-level analysis essential: local configurations
provide the correct scale at which vertex-induced perturbations
concentrate and at which worst-case spectral effects can be identified.

\subsection{Witness Motifs for Spectral
Perturbation}\label{sec-degreeproof}

We isolate a bounded family of geometrically admissible subgraphs whose
local sensitivity under vertex perturbations yields sharp lower bounds
on global spectral change.

\begin{figure}[htbp]

\centering{

\includegraphics[width=4in,height=\textheight,keepaspectratio]{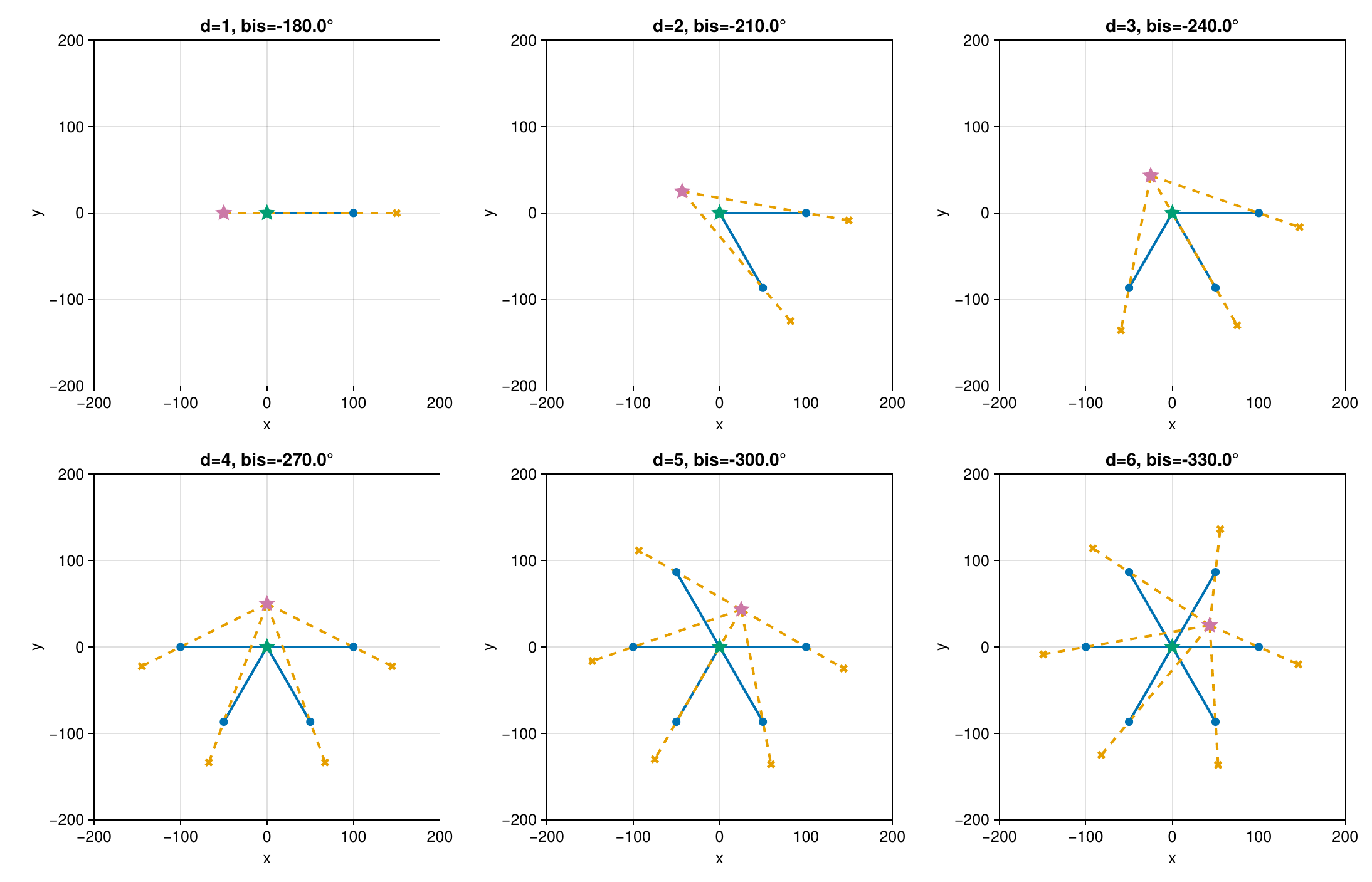}

}

\caption{\label{fig-motifs-worst-case}Canonical geometric motifs under
vertex noise. The six witness motifs considered in this work, shown in
two dimensions with a central hub and increasing degree \(n=1\)--\(6\).
Ground-truth positions are shown in blue (solid edges, filled circles
for spokes, green star for hub); perturbed positions in orange (dashed
edges, crosses for spokes, purple star for hub). For each motif, the
configuration shown corresponds to a maximally sensitive geometric
arrangement with respect to vertex noise, aligned along the bisector
direction indicated in the panel titles; sensitivity away from this
configuration is quantified in Figure~\ref{fig-motifs-optimality}.}

\end{figure}%

\subsubsection{Geometric Packing Bounds on Maximally Sensitive Hub
Motifs}\label{geometric-packing-bounds-on-maximally-sensitive-hub-motifs}

We quantify local sensitivity via the relative change in weighted degree
at a vertex \(v\) under geometric perturbation. Let \[
\mathrm{WD}_G(v) := \sum_{u \in N_G(v)} \varphi(\|x_v-x_u\|),
\] where \(\varphi\) is a differentiable, nondecreasing radial kernel on
the relevant distance range, and define for the perturbed graph \(G'\)
\begin{equation}\phantomsection\label{eq-reldegree}{
\delta_{\mathrm{rel}}\mathrm{WD}(v)
:= \frac{\mathrm{WD}_{G'}(v)-\mathrm{WD}_{G}(v)}{\mathrm{WD}_{G}(v)},
\qquad
\mathrm{WD}_{G}(v) > 0.
}\end{equation}

Under (A1)--(A4), extremal local configurations for
\(\delta_{\mathrm{rel}}\mathrm{WD}(v)\) arise when many incident edges
are simultaneously maximally sensitive to radial displacement while
remaining mutually \(\varepsilon\)-separated. The following two theorems
characterise this extremal problem in Euclidean space
(\(\|\cdot\| = \|\cdot\|_2\)): Theorem \ref{thm:packing-bound} bounds
the number of simultaneously sensitive neighbours via a packing argument
on \(\mathbb{S}^{d-1}\), and Theorem \ref{thm:hub-spoke-extremal}
locates the maximiser of the weighted-degree displacement within the
feasible region. Analogous results hold for other admissible norms, but
the packing geometry of the unit ball---and hence the extremal
catalogue---is norm-dependent.

\begin{theorem}[Packing Bound for Maximally Sensitive Hub Motifs]\label{thm:packing-bound}
Let $x_0 \in \mathbb{R}^d$ be a hub vertex with neighbors $x_1,\dots,x_n$, all at distance $\|x_i-x_0\|=k_{\min}$, under the Euclidean norm and assumptions (A1)--(A4).
Then $n \le \tau_d$, where $\tau_d$ is the kissing number in dimension $d$.
\end{theorem}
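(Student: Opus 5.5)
The plan is to reduce the statement to the definition of the kissing number by using the identifiability assumption (A2) alone; the clearance condition (A1) is then a weaker, redundant constraint. Since every neighbour lies at distance exactly $k_{\min}$ from the hub, set $u_i := (x_i - x_0)/k_{\min} \in \mathbb{S}^{d-1}$. Assumption (A2) requires $\|x_i - x_j\| \ge k_{\min}$ for all distinct vertices, in particular for distinct neighbours $x_i, x_j$. Dividing by $k_{\min}$ gives $\|u_i - u_j\| \ge 1$ for $i \neq j$. For unit vectors,
\begin{equation*}
\|u_i-u_j\|^2 = 2 - 2\langle u_i,u_j\rangle ,
\end{equation*}
so this is equivalent to $\langle u_i,u_j\rangle \le 1/2$, i.e.\ the angle between any two spokes is at least $\pi/3$.

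Next we translate this into a sphere-packing configuration. Place unit-diameter balls (radius $1/2$) centred at $c_0 = 0$ and at $c_i = u_i$, $i=1,\dots,n$. Each outer ball touches the central ball, since $\|c_i - c_0\| = 1$. The outer balls have pairwise disjoint interiors, since $\|c_i - c_j\| \ge 1$. This is exactly a kissing configuration of $n$ non-overlapping congruent balls tangent to a common central ball. Equivalently, it is a spherical code on $\mathbb{S}^{d-1}$ with minimal angular separation $\pi/3$. By definition, the kissing number $\tau_d$ is the maximum size of such a configuration, so $n \le \tau_d$.

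We also need to check that the points $u_i$ are distinct, so that no two neighbours coincide. This also follows from (A2), because $\|x_i - x_j\| \ge k_{\min} > 0$. We then remark that (A1) and (A3)--(A4) are not needed beyond guaranteeing that the setting is well posed. The angular bound $\theta = 2\arcsin(\varepsilon/(2k))$ from \eqref{eq-theta} is weaker than $\pi/3$ whenever $\varepsilon < k$. This holds here because $k_{\min} = 2r_{\max} + \varepsilon > \varepsilon$, so (A2) is the binding constraint.

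The main obstacle is not technical but interpretive. We must be careful that the statement uses the vertex–vertex separation $k_{\min}$ from (A2), and not merely the $\varepsilon$-clearance from (A1). If only (A1) were used, one would obtain the weaker bound $\mathrm{Pack}(\mathbb{S}^{d-1}, \theta)$ with $\theta < \pi/3$, which can exceed $\tau_d$. So the key step is to invoke the clause of (A2) stating that every pair of distinct vertices, adjacent or not, is at distance at least $k_{\min}$. Once that is in place, the reduction to the kissing problem is immediate. In $d=2$ it gives $n \le 6$, matching the six-motif catalogue in Figure~\ref{fig-motifs-worst-case}.
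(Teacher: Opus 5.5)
Your proposal is correct and follows the paper's proof essentially verbatim: both use the all-pairs separation clause of (A2) to turn $\|x_i-x_j\|\ge k_{\min}$ into $\cos\theta_{ij}\le 1/2$, which the paper writes via the law of cosines and you write via $\|u_i-u_j\|^2=2-2\langle u_i,u_j\rangle$, and both then bound $n$ by $\tau_d$ as the maximum size of a spherical code with minimum angle $\pi/3$. Your extra remarks are correct but not needed for the proof: the tangent-ball picture, and the observation that (A1) alone would only give the weaker angle $2\arcsin(\varepsilon/(2k))<\pi/3$.
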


\begin{proof}
By (A2), every pair of distinct vertices satisfies $\|x_i-x_j\|\ge k_{\min}$, whether or not they share an edge.
Since all neighbors lie at distance $k_{\min}$ from the hub, the law of cosines gives, for $i\neq j$,
\[
  \|x_i-x_j\|^2 = 2k_{\min}^2(1-\cos\theta_{ij}),
\]
where $\theta_{ij}$ is the angle subtended at $x_0$.
The constraint $\|x_i-x_j\|\ge k_{\min}$ yields $\cos\theta_{ij}\le\tfrac{1}{2}$, i.e.\ $\theta_{ij}\ge\pi/3$.
The directions $u_i=(x_i-x_0)/k_{\min}\in\mathbb{S}^{d-1}$ therefore form a spherical code with minimum angular separation $\pi/3$, whose maximum cardinality is the kissing number $\tau_d$.
\end{proof}

In particular, for \(d=2\) we have \(\tau_2 = 6\), so the largest
geometrically admissible hub--spoke witness has degree six: the hub and
its neighbors form equilateral triangles, the tightest configuration
permitted by (A2). The extremal regime is \(\varepsilon\to 0\),
\(k=k_{\min}\to 2r_{\max}\), where the noise-to-edge-length ratio---and
hence \(\delta_{\mathrm{rel}}\mathrm{WD}\)---is maximised; for
\(k>k_{\min}\) the relative perturbation diminishes. The finite
catalogue (\(n \le 6\)) therefore corresponds to the maximally sensitive
regime \(k = k_{\min}\). The convex boundary argument underlying the
extremality theorem below holds in arbitrary dimension \(d\); the
restriction to \(d=2\) is made only to obtain the explicit witness
catalogue via \(\tau_2=6\). In higher dimensions, the same construction
applies with \(\tau_d\) replacing \(\tau_2\), yielding a larger
catalogue whose enumeration we do not pursue here.

\subsubsection{Geometric Extremality of
Motifs}\label{geometric-extremality-of-motifs}

\begin{theorem}[Geometric extremality of hub--spoke weighted-degree displacement]\label{thm:hub-spoke-extremal}
Let $d=2$.  Let $x_0 \in \mathbb{R}^2$ be a hub and $x_1,\dots,x_n$ its neighbors with $1 \le n \le \tau_2 = 6$ (Theorem~\ref{thm:packing-bound}), all at Euclidean distance $\|x_i-x_0\|=k_{\min}$ and satisfying $\angle x_i x_0 x_j \ge \pi/3$ for $i\neq j$.
Let edge weights be $w_i=\varphi(\|x_i-x_0\|)$, where $\varphi:[0,\infty)\to\mathbb{R}$ is differentiable, nondecreasing, and convex.
Suppose the hub may move within $B_0=\{h:\|h-x_0\|\le r_{\max}\}$ and each spoke endpoint within
$B_i=\{e:\|e-x_i\|\le r_{\max}\}$.
Define the hub weighted-degree change $\Delta W(h,e_1,\dots,e_n):= \sum_{i=1}^n \bigl[\varphi(\|e_i-h\|)-\varphi(k_{\min})\bigr].$
Then there exists an admissible maximizer $(h^\star,e_1^\star,\dots,e_n^\star)$ of $\Delta W$ such that
\begin{enumerate}
\item[(i)] $h^\star$ lies on the boundary of $B_0$, and
\item[(ii)] for each $i$, $e_i^\star$ lies on the boundary of $B_i$ along the ray from $h^\star$ through $x_i$, pointing away from $h^\star$.
\end{enumerate}
\end{theorem}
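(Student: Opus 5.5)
The proof has two stages: first fix the hub and optimise the spokes exactly, then optimise over the hub position using convexity.

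\emph{Existence.} The admissible set $B_0\times B_1\times\cdots\times B_n$ is compact. $\Delta W$ is continuous, since $\varphi$ is differentiable and the norm is continuous. Hence a maximizer exists, and we only need to show that some maximizer can be moved into the stated form without decreasing $\Delta W$.

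\emph{Step 1: spokes for fixed hub.} Fix $h\in B_0$. Then $\Delta W$ separates as a sum of terms $\varphi(\|e_i-h\|)$, each depending on a single $e_i$. Because $\varphi$ is nondecreasing, each term is maximised by maximising $\|e_i-h\|$ over $e_i\in B_i$. The farthest point of a Euclidean ball from an external point $h$ is
\[
e_i^\star(h)=x_i+r_{\max}\frac{x_i-h}{\|x_i-h\|},
\]
which gives $\|e_i^\star-h\|=\|x_i-h\|+r_{\max}$. This point lies on $\partial B_i$, on the ray from $h$ through $x_i$, beyond $x_i$. Division by $\|x_i-h\|$ is legitimate because $\|x_i-h\|\ge k_{\min}-r_{\max}\ge r_{\max}+\varepsilon>0$. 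This proves (ii) for any choice of hub, and reduces the problem to maximising
\[
g(h):=\sum_{i=1}^n\bigl[\varphi(\|x_i-h\|+r_{\max})-\varphi(k_{\min})\bigr]\quad\text{over } h\in B_0.
\]

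\emph{Step 2: hub on the boundary.} Each map $h\mapsto \|x_i-h\|+r_{\max}$ is convex and positive. Composing with the convex, nondecreasing $\varphi$ preserves convexity, so $g$ is a sum of convex functions and is therefore convex on $B_0$. A continuous convex function on a compact convex set attains its maximum at an extreme point. The extreme points of a Euclidean disc are exactly its boundary circle, so some maximizer satisfies $h^\star\in\partial B_0$, proving (i).

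For completeness I will spell out the extreme-point step directly. Take any interior maximizer $h$ and any line through it; this line meets $\partial B_0$ at two points, and $h$ is a convex combination of them. Convexity gives that $g$ at one of these endpoints is at least $g(h)$. That endpoint is therefore also a maximizer and lies on $\partial B_0$. Pairing $h^\star$ with $e_i^\star(h^\star)$ from Step 1 gives an admissible maximizer of $\Delta W$ with both properties.

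\emph{Where the care is needed.} The main subtlety is the composition argument in Step 2. It needs $\varphi$ to be convex and nondecreasing on the range $[k_{\min},\,k_{\min}+2r_{\max}]$ actually attained by $\|x_i-h\|+r_{\max}$. It also needs the hub to stay bounded away from each spoke, which the separation established in Step 1 guarantees.

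The angular constraints $\angle x_ix_0x_j\ge\pi/3$ and the bound $n\le 6$ play no role in the extremality argument. They only guarantee that the configuration is admissible in the sense of Theorem~\ref{thm:packing-bound}.

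Non-uniqueness does not affect the claim, since the theorem asserts only that such a maximizer exists. Identifying which boundary point is optimal (for example, along a bisector direction) is a separate question and is not needed here.
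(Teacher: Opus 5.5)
Your proof is correct. It rests on the same two facts as the paper's: convexity of $\varphi$ composed with a norm-type map pushes the hub to $\partial B_0$, and monotonicity of $\varphi$ pushes each spoke to the far point of $B_i$.

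The difference is the order in which you use them:
\begin{itemize}
\item The paper fixes the spokes and uses convexity of $F(h)=\sum_i\varphi(\|e_i-h\|)$ to move the hub to the boundary. It then re-optimises the spokes for that hub and concludes by monotone replacement: neither step decreases $\Delta W$. This implicitly starts from a global maximizer.
\item You eliminate the spokes exactly for every hub, obtaining the profile $g(h)=\sum_i[\varphi(\|x_i-h\|+r_{\max})-\varphi(k_{\min})]$. You then apply convexity once, to $g$.
\end{itemize}

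Each route buys something different. The paper's is shorter, and it never needs the spoke-optimal value as a function of $h$ or the nondegeneracy $h\neq x_i$. Yours makes existence explicit and shows that $\max_{e}\Delta W(h,e)=g(h)$ is itself convex in the hub. It also leaves you with a single function on the circle $\partial B_0$. That is exactly the object one maximises to locate the optimal bisector-type hub direction. For identity $\varphi$ and adjacent spokes it reproduces the tabulated displacements, e.g.\ $2r$ for $n=1$ and about $3.82r$ for $n=2$ in Table~\ref{tbl-degree-summary}.
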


\begin{proof}
Fix the spoke endpoints $e_i$ and define $F(h)=\sum_i \varphi(\|e_i-h\|)$.
Since $\varphi$ is convex and nondecreasing and the Euclidean norm $\|\cdot\|$ is convex, each summand $\varphi(\|e_i-h\|)$ is convex in $h$; hence $F$ is convex.
A convex function on a compact convex set attains its maximum on the boundary, so $h^\star\in\partial B_0$; this proves~(i).
Now fix $h=h^\star$ and consider $G_i(e)=\varphi(\|e-h\|)$ for $e\in B_i$.
Since $\varphi$ is nondecreasing, $G_i$ is maximized by choosing $e$ to maximize $\|e-h\|$, which is achieved at the boundary of $B_i$ along the ray from $h$ through $x_i$; this proves~(ii).
Replacing any admissible configuration by this boundary configuration cannot decrease $\Delta W$.
\end{proof}

\emph{Remark.} The angular separation and packing feasibility that
bounds admissible motif degree is enforced at the level of motif
construction via Assumption (A2) and Theorem \ref{thm:packing-bound},
i.e.~on the unperturbed geometry. Theorem \ref{thm:hub-spoke-extremal}
characterizes where the maximal weighted-degree displacement occurs
within a fixed admissible hub--spoke motif, and therefore imposes no
additional angular constraint on the perturbed configuration:
noise-displaced vertices are not required to preserve pre-noise packing
geometry.

In the remainder of this work we use the shorthand \(G_n\) for the
hub--spoke motifs of degree \(n\) constructed above. For such a motif,
we write \(\delta \mathrm{WD}_{\varepsilon \to 0}(G_n; r) = C_n\, r\) to
denote the maximal weighted-degree displacement in the small-noise
limit, where \(C_n\) is a dimensionless geometric coefficient. The
spectral impact of the extremal configurations identified above is
assessed empirically in Table~\ref{tbl-degree-summary}.

For other members of the admissible metric family
(§\ref{sec-graphmodel}), the structural argument of Theorems
\ref{thm:packing-bound}--\ref{thm:hub-spoke-extremal} carries over, but
the packing geometry of the unit ball and the convexity properties of
the weight kernel are norm-dependent; consequently, both the degree
bound and the boundary-maximiser result must be rederived, yielding a
metric-specific extremal catalogue.

\subsection{Tractable Estimation via Motif
Packing}\label{sec-motif-packing}

The Weyl bound in \eqref{eq:weyl-bound} controls spectral displacement
via the perturbation matrix \(E\), but computing \(\|E\|_2\) or
\(\|E\|_F\) directly requires access to the full noise realization. We
obtain a practical and tight estimate by leveraging the analytical
extremal behaviour of locally fragile motifs.

Let \(\mathcal{M} = \{m_1,\dots,m_p\}\) be a collection of
vertex-disjoint witness motifs embedded in \(G\). Writing the Laplacian
perturbation as a sum over motif-restricted contributions, \[
E \;=\; \sum_{i=1}^{p} E_{m_i} \;+\; E_R,
\] where \(E_{m_i}\) is the Laplacian perturbation contributed by edges
internal to motif \(m_i\)---that is, the off-diagonal entries \((u,v)\)
for edges \(\{u,v\} \subseteq V(m_i)\) and the corresponding diagonal
row sums---and \(E_R\) collects all remaining contributions, including
edges incident to motif vertices but not internal to any single motif.
Since the motifs are vertex-disjoint, the matrices
\(E_{m_1},\dots,E_{m_p}\) have pairwise disjoint row/column support, so
their Frobenius norms add in quadrature:
\begin{equation}\phantomsection\label{eq-frob-quadrature}{
\sum_{i \neq j} \langle E_{m_i}, E_{m_j} \rangle_F = 0,
\qquad\text{hence}\qquad
\Bigl\|\sum_{i=1}^{p} E_{m_i}\Bigr\|_F^2 = \sum_{i=1}^{p} \|E_{m_i}\|_F^2.
}\end{equation} However, the residual \(E_R\) may share row/column
support with individual motif blocks (via diagonal degree entries at
motif vertices with external edges), so quadrature does not extend to
\(E_R\). The triangle inequality yields the always-valid bound
\begin{equation}\phantomsection\label{eq-frob-mixed}{
\|E\|_F \;\le\; \Bigl(\sum_{i=1}^{p} \|E_{m_i}\|_F^2\Bigr)^{1/2} + \|E_R\|_F,
}\end{equation} which retains \(\sqrt{p}\) scaling among the motif terms
while conservatively bounding the residual contribution.

Combining Equation~\ref{eq-frob-mixed} with Weyl's inequality yields the
motif-wise spectral bound \[
|\lambda_k(G')-\lambda_k(G)|
\;\le\; \|E\|_2
\;\le\; \|E\|_F
\;\le\; \Bigl(\sum_{i=1}^{p} \|E_{m_i}\|_F^2\Bigr)^{1/2} + \|E_R\|_F,
\] explicitly relating global spectral displacement to local motif
contributions. In particular, the motif-only part satisfies \[
\max_{i}\|E_{m_i}\|_F
\;\le\; \Bigl(\sum_{i=1}^{p} \|E_{m_i}\|_F^2\Bigr)^{1/2}
\;\le\; \sqrt{p}\;\max_{i}\|E_{m_i}\|_F,
\] showing that, at the level of this Frobenius-based estimate, repeated
occurrences of fragile motifs inflate the motif contribution at most as
\(\sqrt{p}\). Note that the \(\sqrt{p}\) factor is an artifact of the
Frobenius norm bound; since the motif blocks have disjoint support, the
operator norm satisfies \(\|\sum_i E_{m_i}\|_2 = \max_i \|E_{m_i}\|_2\),
which is independent of \(p\). The Frobenius route is retained because
it extends naturally to the mixed bound with \(E_R\).

To make this bound operational, we replace each \(\|E_{m_i}\|_F\) by the
analytically derived worst-case value for its motif type. A maximal
disjoint set of such motifs can be extracted efficiently using a greedy
procedure that prioritizes vertices whose local degree corresponds to
high perturbation sensitivity, marking covered vertices to ensure
disjointness. The tiling enforces vertex-disjointness only; edges
between motifs are permitted, and their perturbative effects are
accounted for in the remainder term \(E_R\). The resulting estimate, \[
\Bigl(\sum_{m_i\in\mathcal{M}} \|E_{m_i,\max}\|_F^2\Bigr)^{1/2},
\] provides a computable, data-driven upper bound on the motif
contribution to spectral fragility, controlled jointly by motif severity
and frequency. Algorithmic details are given in the appendix. Selecting
an optimal disjoint motif cover is computationally intractable in
general. Indeed, the problem of maximizing total motif coverage under
vertex-disjointness constraints reduces to a maximum-weight independent
set problem on an associated conflict graph. Since maximum-weight
independent set is NP-hard, optimal motif tiling is NP-hard as well; a
formal reduction is provided in Appendix
Section~\ref{sec-np-hard-proof}. We therefore rely on the greedy
heuristic above, which is fast, reproducible, and sufficient for tight
practical bounds.

\subsection{Spectral Perturbation Under Constraint
Repair}\label{sec-repair-perturbation}

This section explains how we extend motif-based perturbation
bounds---which are proved under geometric regularity assumptions (e.g.,
\(\varepsilon\)-thickness)---to graphs whose embeddings violate those
assumptions. The core idea is to repair the embedding into a constrained
graph where the motif strategy applies, using local operations that
enforce thickness while preserving coarse geometry.

From a geometric graph \(G=(V,E)\) embedded in \(\mathbb{R}^d\) with
straight-line edges, we build a repaired graph
\(\widetilde G = \mathcal R(G)\) by applying local edits until all
thickness constraints are satisfied. We use the following canonical
operations (illustrated in the Algorithms appendix): edge crossing (if
two edges cross in their interiors, insert a subdivision vertex at the
intersection point and split both edges); vertex collision (if two
vertices violate the identifiability/clearance threshold, fuse them into
a single vertex, merging incident edges and combining weights according
to the model in the Algorithms appendix); and vertex--edge contact (if a
vertex lies on the interior of an edge or violates vertex--edge
clearance, insert a subdivision vertex at the contact and split the
edge).

Each local repair may introduce new constraint violations: splitting an
edge can create a segment that passes close to a third vertex, and
vertex fusion alters the adjacency structure. The repair therefore
proceeds iteratively---via a priority queue in Algorithm 6---until no
violations remain. To see that termination is guaranteed, define the
discrete potential \(\Phi = |V| + C\), where \(C\) is the number of
geometric constraint violations (edge crossings, vertex--vertex
collisions, and vertex--edge proximity violations). Each edit strictly
decreases at least one component: vertex fusion reduces \(|V|\), and
edge subdivision reduces \(C\) for the resolved violation without
increasing \(|V|\). Since \(\Phi\) is nonnegative and integer-valued,
infinite cascading is impossible. We do not claim preservation of global
graph distances or spanner properties; the repair serves to enforce the
geometric constraints required for subsequent motif analysis. In
particular, spectral stability does not follow from metric distortion
bounds, and edge weights are not generally additive under subdivision.

Algorithm 6 specifies the repair procedure abstractly; a general-purpose
implementation for arbitrary \(d\)-dimensional collision resolution is
not provided. The experiments in Section~\ref{sec-ctpl-noise-results}
use a hand-crafted repair tailored to the reference graph, where
constraint violations are few and geometrically transparent. In general,
automated \(k\)-dimensional collision resolution is non-trivial and lies
outside the scope of this work: naive implementations risk violating
modeling-domain constraints or introducing pathological collapses,
particularly when violations are dense or clustered. For example, if all
vertices lie within \(\varepsilon\) of one another, contraction
collapses the graph to a single vertex with arbitrarily large spectral
deviation; similarly, a high-degree hub with many crossing edges can
cause combinatorial blow-up under naive per-crossing subdivision. The
perturbation bounds are therefore meaningful in the sparse-violation
regime assumed in the experiments.

The repaired graph (constrained surrogate) \(\widetilde G\) has vertex
set \(V_0 \cup V_s\), where \(V_0\) are the original vertices (possibly
after fusion) and \(V_s\) are deterministic subdivision vertices
introduced by repair. By construction, \(\widetilde G\) satisfies the
geometric constraints needed for the motif-tiling bounds in
Section~\ref{sec-motif-packing}.

To clarify the scope of the subsequent perturbation bound, observe that
the total spectral effect of noise on the original graph decomposes as
\[
L_{G'} - L_G
\;=\;
\bigl(L_{G'} - L_{\widetilde G'}\bigr)
\;+\;
\bigl(L_{\widetilde G'} - L_{\widetilde G}\bigr)
\;+\;
\bigl(L_{\widetilde G} - L_G\bigr).
\] The theorem below controls the middle term---spectral perturbation of
the surrogate under vertex noise. The first and third terms represent
modeling distortion introduced by repair. Each vertex contraction
modifies at most \(O(d_{\max})\) rows of the Laplacian, and each edge
split modifies \(O(1)\) rows, so the total repair distortion scales with
the number of edits and the maximum degree. This distortion remains
small when repairs are sparse---the regime assumed in the
experiments---but may degrade under adversarial configurations; a
non-trivial formal bound would require accounting for pathological cases
and is not attempted here.

\subsubsection{A degree-controlled Laplacian perturbation bound on a
repaired
graph}\label{a-degree-controlled-laplacian-perturbation-bound-on-a-repaired-graph}

We consider i.i.d.~\ac{CTPL} vertex noise acting on the original
vertices only. Let \(L_{\widetilde G}\) be the combinatorial Laplacian
of \(\widetilde G\) and \(L_{\widetilde G'}\) the Laplacian after
perturbing the coordinates of vertices in \(V_0\). Write the Laplacian
perturbation as \[
\Delta_{\widetilde G} := L_{\widetilde G'} - L_{\widetilde G}.
\]

\begin{theorem}[Degree-controlled Frobenius bound and spectral shift control]\label{thm:repair-spectral-bound}
Let $H = \widetilde G$ be the repaired graph with vertex set $V_0\cup V_s$, where noise acts only on $V_0$.
Let $\delta w_{uv}$ denote the change in edge weight on $\{u,v\}\in E(H)$ induced by the noisy coordinates.
Define
$$
S(H) := \sum_{\{u,v\}\in E(H)} (\delta w_{uv})^2,
\qquad
\Delta(H) := \max_{u\in V_0\cup V_s} \deg_H(u).
$$
Then the Laplacian perturbation satisfies the Frobenius estimate
\begin{equation}\label{eq:frob-degree-bound}
\|\Delta_H\|_F^2
\;\le\; \bigl(2 + 2\,\Delta(H)\bigr)\, S(H).
\end{equation}
Consequently, writing $S_2(L,L') := \bigl(\sum_{i=1}^n (\lambda_i(L')-\lambda_i(L))^2\bigr)^{1/2}$ for the $\ell_2$ spectral shift, we have
\begin{equation}\label{eq:spectralshift-from-frob}
S_2(L_H,L_{H'}) \le \|\Delta_H\|_F
\;\le\; \sqrt{2 + 2\,\Delta(H)} \;\sqrt{S(H)}.
\end{equation}
\end{theorem}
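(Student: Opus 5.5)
The plan is to write $\Delta_H$ explicitly in terms of the edge-weight changes and bound its Frobenius norm entrywise, splitting into off-diagonal and diagonal parts. Since the edge set of $H$ is fixed and only the weights change, $\Delta_H = \sum_{\{u,v\}\in E(H)} \delta w_{uv}\,(e_u-e_v)(e_u-e_v)^{\top}$. Hence the off-diagonal entries are $(\Delta_H)_{uv} = -\delta w_{uv}$ for $\{u,v\}\in E(H)$ and zero otherwise. The diagonal entries are $(\Delta_H)_{uu} = \sum_{v\sim u}\delta w_{uv}$. Edges between two subdivision vertices in $V_s$ have $\delta w=0$, since those coordinates are deterministic. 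They contribute nothing but may be kept in the sums harmlessly.

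For the off-diagonal part, each edge appears twice (entries $(u,v)$ and $(v,u)$). This gives exactly $\sum_{u\neq v}(\Delta_H)_{uv}^2 = 2S(H)$.

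For the diagonal part, apply Cauchy--Schwarz at each vertex:
\[
\Bigl(\sum_{v\sim u}\delta w_{uv}\Bigr)^2 \le \deg_H(u)\sum_{v\sim u}(\delta w_{uv})^2 \le \Delta(H)\sum_{v\sim u}(\delta w_{uv})^2 .
\]
Summing over $u$, each edge is counted once from each endpoint. The diagonal part is therefore at most $2\Delta(H)\,S(H)$. Adding the two parts yields \eqref{eq:frob-degree-bound}, namely $\|\Delta_H\|_F^2\le (2+2\Delta(H))S(H)$.

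For \eqref{eq:spectralshift-from-frob}, I would invoke the Hoffman--Wielandt inequality for symmetric matrices: with both spectra sorted increasingly, $\sum_i(\lambda_i(L_{H'})-\lambda_i(L_H))^2 \le \|L_{H'}-L_H\|_F^2$. This applies because both Laplacians are real symmetric of the same size $n=|V_0\cup V_s|$. Taking square roots and chaining with the Frobenius estimate gives the claim. Weyl (Theorem~\ref{thm:weyl-upper-bound}) alone would only control each shift individually by $\|\Delta_H\|_2$, so Hoffman--Wielandt is the right tool for the $\ell_2$ aggregate.

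The main point requiring care is bookkeeping rather than a genuine obstacle. First, the edge set of $H'$ must coincide with that of $H$, so that the perturbation is purely a weight change. This holds because repair is applied once to the unperturbed embedding and noise acts only on coordinates. Second, the degree in Cauchy--Schwarz is the combinatorial degree in $H$, which is bounded by $\Delta(H)$ over all of $V_0\cup V_s$. A slightly sharper constant, with $\deg_H(u)$ counting only edges with $\delta w_{uv}\neq 0$, is available but not needed.
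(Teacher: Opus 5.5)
Your proposal is correct and follows essentially the same route as the paper: the off-diagonal entries contribute exactly $2S(H)$, vertexwise Cauchy--Schwarz bounds the diagonal by $2\Delta(H)S(H)$, and Hoffman--Wielandt converts the Frobenius bound into the $\ell_2$ spectral shift. Your extra bookkeeping makes explicit two points the paper leaves implicit: that $E(H')=E(H)$, so the perturbation is purely a weight change, and that the sorted ordering is the right pairing in the Hermitian form of Hoffman--Wielandt.
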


\begin{proof}
Write $\Delta_H = L_{H'}-L_H$.
For a weighted Laplacian, off-diagonal entries satisfy $(L_H)_{uv}=-w_{uv}$ for $\{u,v\}\in E(H)$, hence off-diagonal perturbations contribute
$$
\sum_{u\ne v} (\Delta_H)_{uv}^2
= 2 \sum_{\{u,v\}\in E(H)} (\delta w_{uv})^2
= 2\,S(H).
$$
Diagonal entries satisfy $(L_H)_{uu}=\sum_{v\sim u} w_{uv}$, so the diagonal perturbation at $u$ is
$\sum_{v\sim u}\delta w_{uv}$, and
$$
\sum_u (\Delta_H)_{uu}^2
= \sum_{u\in V_0\cup V_s} \Bigl(\sum_{v\sim u}\delta w_{uv}\Bigr)^2.
$$
For each $u\in V_0\cup V_s$, Cauchy--Schwarz yields
$$
\Bigl(\sum_{v\sim u}\delta w_{uv}\Bigr)^2
\le \deg_H(u)\sum_{v\sim u}(\delta w_{uv})^2
\le \Delta(H)\sum_{v\sim u}(\delta w_{uv})^2.
$$
Summing over $u\in V_0\cup V_s$ and re-indexing edge terms gives
$$
\sum_{u\in V_0\cup V_s}\Bigl(\sum_{v\sim u}\delta w_{uv}\Bigr)^2
\le 2\,\Delta(H)\sum_{\{u,v\}\in E(H)}(\delta w_{uv})^2
= 2\,\Delta(H)\,S(H).
$$
Combining diagonal and off-diagonal contributions yields \eqref{eq:frob-degree-bound}.
The spectral-shift inequality \eqref{eq:spectralshift-from-frob} follows from the Hoffman--Wielandt theorem
\cite{HoffmanWielandt1953}.
\end{proof}

\subsection{Stochastic spectral metrics for observability and
separability}\label{sec-scssi}

To quantify how vertex noise affects spectral measurements, we do not
introduce new spectral distances. Instead, we formalize moment-based
summaries of existing spectral distances in order to assess (i) whether
spectral differences are observable under noise, and (ii) whether
different noise regimes can be distinguished from spectral data alone.
Fix a graph \(G\), a noise model \(\mathcal N\), and a spectral distance
\(d_{SD}\). Let \(D := d_{SD}\bigl(G,\mathcal N(G)\bigr)\) denote the
induced spectral-distance random variable.

\subsubsection{Stochastic co-spectrality
(SC)}\label{stochastic-co-spectrality-sc}

We define the stochastic co-spectrality moments as the raw moments of
\(D\): \[
\mathrm{SC}_k(G;\mathcal N) := \mathbb E[D^k], \qquad k=1,2,\ldots
\] whenever they exist. In particular, \[
\mu_D := \mathrm{SC}_1(G;\mathcal N),
\qquad
\sigma_D^2 := \mathrm{SC}_2(G;\mathcal N)-\mathrm{SC}_1(G;\mathcal N)^2 .
\] These quantities summarize the expected spectral displacement induced
by noise and its variability. For untempered heavy-tailed noise models,
such moments need not exist; under the calibrated CTPL model, all
moments required below are finite. A central use of \(\mathrm{SC}\) is
to assess noise observability. Let
\(D^\star := d_{SD}\bigl(\mathcal N(G),\mathcal N(G')\bigr)\) be an
observed spectral distance between two noisy graphs. Using Cantelli's
inequality \citep{Cantelli1910}, if \(\mu_D\) and \(\sigma_D^2\) exist
then
\(\Pr\!\left[D \ge \mu_D + \sigma_D \sqrt{\tfrac{1-\alpha}{\alpha}}\right] \le \alpha.\)
We therefore say that \(D^\star\) is noise-observable at level
\(\alpha\) if
\(\frac{D^\star-\mu_D}{\sigma_D} > \sqrt{\tfrac{1-\alpha}{\alpha}}.\)

\subsubsection{Observable SC from noisy data (weak-oracle
setting)}\label{observable-sc-from-noisy-data-weak-oracle-setting}

When the noise-free graph \(G\) is unavailable, \(\mathrm{SC}\) can
still be estimated from paired noisy observations. Let
\(G_i,G_j\sim\mathcal N(G)\) be independent perturbations. Define \[
\mathrm{SC}_{\mathrm{obs}}(G;\mathcal N)
:= \mathbb E\!\left[d_{SD}(G_i,G_j)\right].
\] By the triangle inequality, \[
\mathbb E\!\left[|D_i-D_j|\right]
\;\le\;
\mathrm{SC}_{\mathrm{obs}}(G;\mathcal N)
\;\le\;
2\,\mathbb E[D],
\] so \(\mathrm{SC}_{\mathrm{obs}}\) provides an observable proxy for
spectral compactness under noise, with finite variance whenever
\(\mu_D\) and \(\sigma_D^2\) exist.

\subsubsection{Stochastic spectral separation index
(S3I)}\label{stochastic-spectral-separation-index-s3i}

To compare two noise models \(\mathcal N_a\) and \(\mathcal N_b\) acting
on the same graph \(G\), let \[
D^{(a)} := d_{SD}\bigl(G,\mathcal N_a(G)\bigr),
\qquad
D^{(b)} := d_{SD}\bigl(G,\mathcal N_b(G)\bigr),
\] with means \(\mu_a,\mu_b\) and variances \(\sigma_a^2,\sigma_b^2\).
We define the stochastic spectral separation index as \[
\mathrm{S3I}(G;\mathcal N_a,\mathcal N_b)
:=
\frac{|\mu_a-\mu_b|}
{\sqrt{(\sigma_a^2+\sigma_b^2)/2}} .
\] This quantity is an effect-size-style summary, analogous to Cohen's
\(d\), measuring how distinguishable the average spectral impacts of two
noise regimes are relative to their pooled variability. In practice,
\(\mathrm{S3I}\) is estimated from empirical samples (either
strong-oracle or weak-oracle), and is used descriptively rather than as
a formal separability guarantee. In empirical settings, both
\(\mathrm{SC}\) and \(\mathrm{S3I}\) are estimated from finite samples
of noisy realizations. To assess uncertainty we compute \(\mathrm{S3I}\)
independently over \(N\) replicated Monte Carlo runs, each drawing fresh
noise realisations, and report the \((1-\alpha)\) percentile interval of
the resulting \(\mathrm{S3I}\) distribution. If this interval excludes
zero, the two noise regimes are deemed empirically separable under the
\((1-\alpha)\) percentile criterion. This procedure is used throughout
the experimental results to assess robustness of spectral
distinguishability.

\subsection{Extent of vertex--edge noise distribution
divergence}\label{sec-vn-noise-results}

\begin{figure}[htbp]

\centering{

\includegraphics[width=\linewidth,height=3in,keepaspectratio]{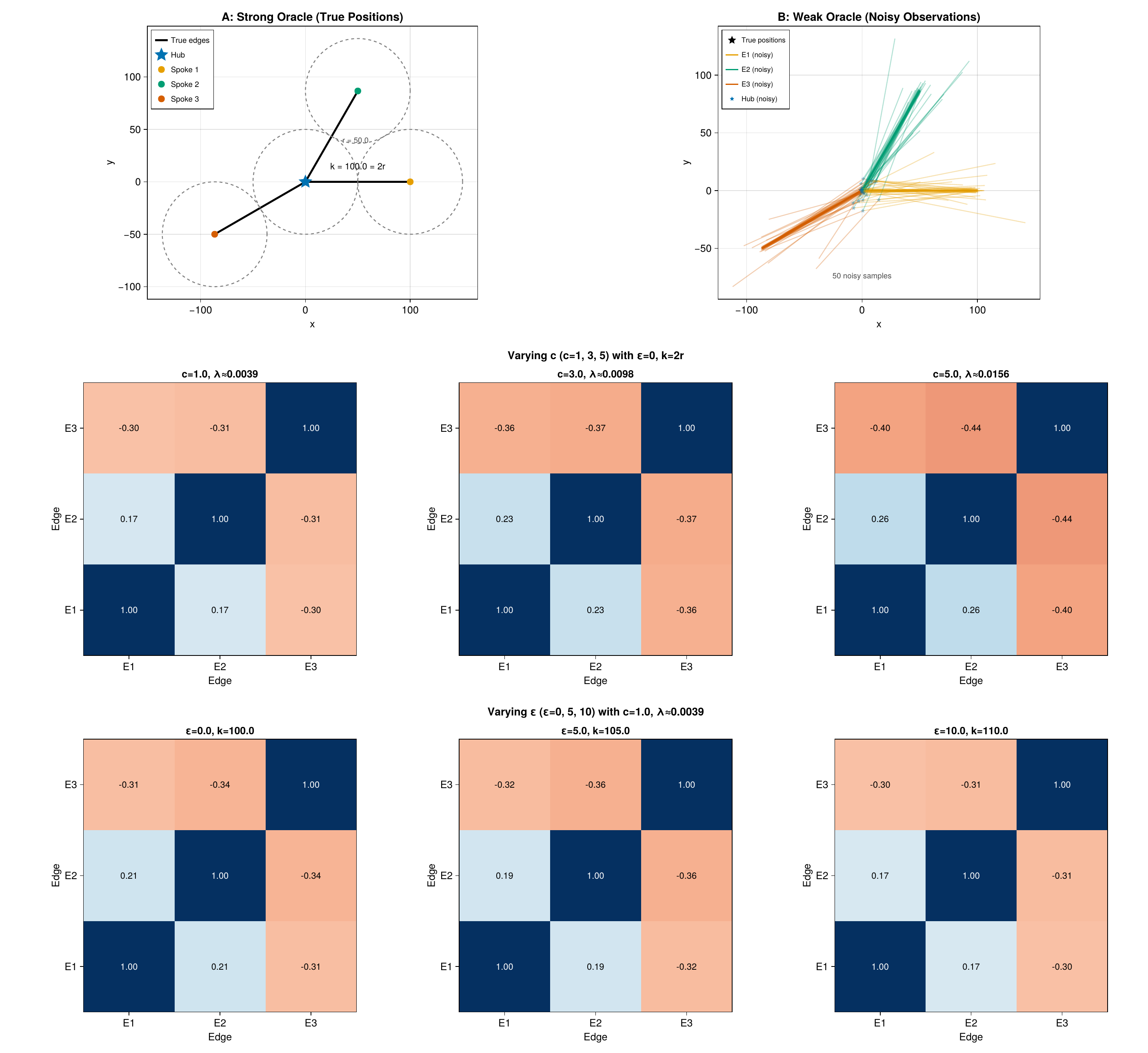}

}

\caption{\label{fig-vertex-noise-is-not-edge-noise}Vertex noise induces
correlated edge perturbations. (A) Strong oracle: an asymmetric
three-spoke hub motif at \(k=2r_{\max}\) with ground-truth positions
(blue star = hub, colored circles = spokes) and perturbation balls (gray
dashed). (B) Weak oracle: 50 noisy realisations overlaid; faded black
lines and markers show ground-truth positions for reference, coloured
semi-transparent lines show perturbed edges. Bottom rows: empirical
edge-length correlation matrices (\(500\) samples each). Row 2 varies
the scale parameter \(c\) at fixed \(\varepsilon=0\) (with \(\lambda\)
recalibrated per \(c\)); Row 3 varies \(\varepsilon\) at fixed \(c=1\).
Off-diagonal correlations are consistently nonzero, confirming that
vertex noise induces structured edge dependencies even in minimal
configurations.}

\end{figure}%

Figure~\ref{fig-vertex-noise-is-not-edge-noise} illustrates a
fundamental distinction between vertex- and edge-based noise models
using an asymmetric three-spoke hub motif (\(k=100\), \(r_{\max}=50\),
spokes at \(0^\circ\), \(60^\circ\), \(210^\circ\); \(500\) noise
samples per condition, calibration tolerance \(\delta=0.05\)). Even for
this minimal configuration, independent vertex perturbations induce
correlated and geometry-dependent edge-weight fluctuations. This
confirms that vertex noise cannot, in general, be reduced to independent
or identically distributed edge noise, motivating the motif-level
analysis developed in the Methods section.

\subsection{\texorpdfstring{Empirical spectral signatures of \ac{CTPL}
noise}{Empirical spectral signatures of  noise}}\label{sec-ctpl-noise-results}

\begin{figure}[htbp]

\centering{

\pandocbounded{\includegraphics[keepaspectratio]{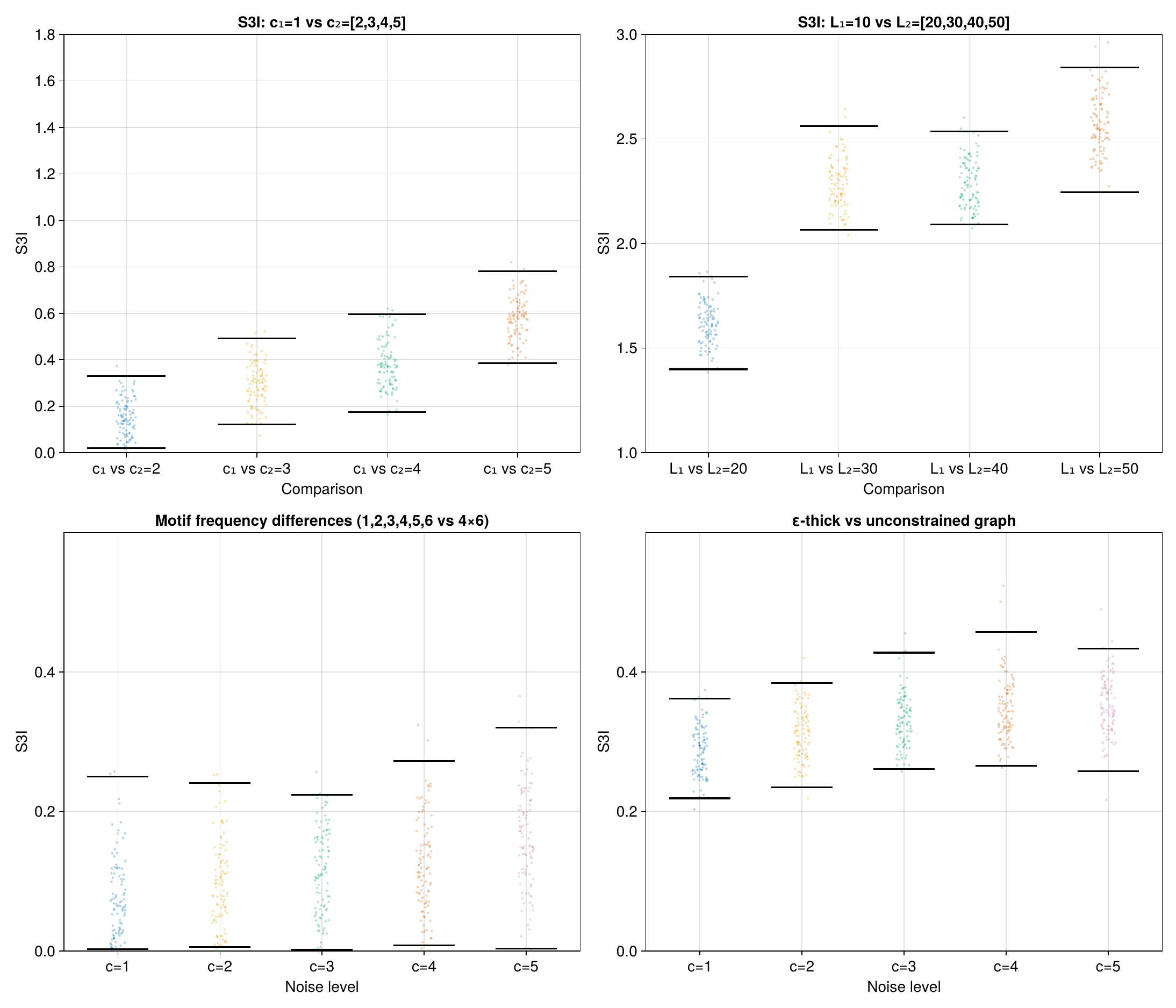}}

}

\caption{\label{fig-s3-results}Motif-informed graph-level sensitivity
and repair effects. Each panel reports \ac{S3I} values from \(100\)
independent Monte Carlo runs (\(200\) noise samples each), with \(95\%\)
percentile confidence intervals (black bars). A checkmark indicates that
the interval excludes zero, i.e.~\ac{S3I} is consistently positive
across runs. (A) Separation induced by changes in the scale parameter
\(c\) at fixed \(r_{\max}\). (B) Separation induced by changing
\(r_{\max}\) at fixed \(c\). (C) Separation between graphs with
different motif compositions (heterogeneous \(1\)--\(6\) versus uniform
\(4\times 6\)) under identical noise. (D) Separation between an
unconstrained graph and its \(\varepsilon\)-thick repaired counterpart.
Note that each comparison is a pairwise test at a single operating
point; \ac{S3I} values should not be compared across operating points
without further testing.}

\end{figure}%

We evaluate \ac{S3I} on the reference graph of
Figure~\ref{fig-motifgraph} (see Appendix) (\(k=100\), inter-motif
spacing \(300\); \(28\) vertices, \(27\) edges), using calibrated
\ac{CTPL} noise with tolerance \(\delta=0.05\). Each comparison
comprises \(100\) independent runs of \(200\) noise samples, with
\(95\%\) percentile intervals reported. Panel A compares \(c_1=1\)
against \(c_2\in\{2,3,4,5\}\) at fixed \(r_{\max}=40\); Panel B compares
\(r_{\max}=10\) against \(r_{\max}\in\{20,30,40,50\}\) at fixed \(c=1\);
Panel C compares the heterogeneous reference graph against a uniform
(\(6\times\) degree-\(4\)) variant at \(c\in\{1,\dots,5\}\); Panel D
compares an unconstrained graph (hub removed, crossing edges) against
its \(\varepsilon\)-thick repaired counterpart at \(c\in\{1,\dots,5\}\).

Figure~\ref{fig-s3-results} demonstrates that \ac{S3I} detects multiple,
qualitatively distinct sources of spectral variation. Varying the
\ac{CTPL} scale parameter \(c\) (Panel A) or the maximal jump radius
\(r_{\max}\) (Panel B) produces clear spectral separation, with all
confidence intervals well above zero, indicating sensitivity to noise
parameters alone. Changing motif composition while holding global
structure fixed (Panel C) yields a weaker but detectable separation:
\ac{S3I} is consistently positive, though the lower confidence bound
approaches zero at low noise, indicating that separability diminishes as
perturbation magnitude decreases. Similarly, enforcing
\(\varepsilon\)-thickness via local repair (Panel D) produces a bounded
spectral shift consistent with the degree-controlled perturbation
framework of Section~\ref{sec-motif-packing}, with lower bounds close to
zero at low \(c\). Each panel is a pairwise comparison at a given
operating point; no cross-comparison between operating points
(e.g.~across \(c\) values) is tested.

Together, these experiments confirm that \ac{S3I} provides a practical,
observable summary of how noise parameters, geometric constraints, and
local graph structure jointly shape spectral variability under vertex
noise.

\subsection{Motif geometry
sensitivity}\label{sec-motif-geometry-results}

\begin{figure}[htbp]

\centering{

\includegraphics[width=0.9\linewidth,height=\textheight,keepaspectratio]{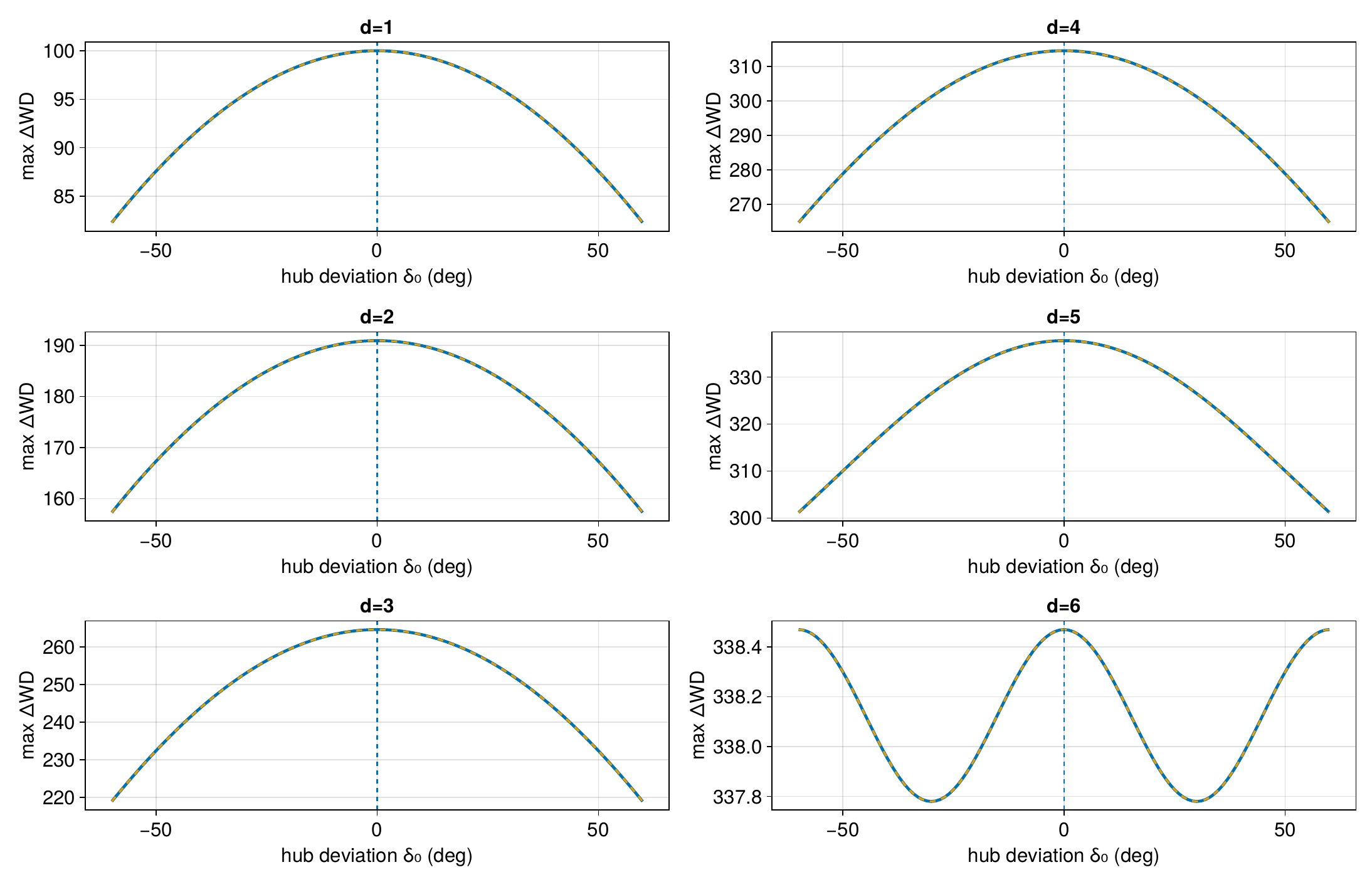}

}

\caption{\label{fig-motifs-optimality}Sensitivity of witness motifs to
angular perturbations. For each motif degree \(n=1,\dots,6\), the
maximum change in weighted degree \(\Delta \mathrm{WD}\) is shown as a
function of the hub displacement angle relative to the optimal bisector
axis of the unoccupied arc. For \(n \le 5\), the extremum is unique; for
\(n=6\), hexagonal symmetry induces multiple equivalent maxima. The
admissible angular range is determined by the \(\varepsilon\)-thickness
constraint.}

\end{figure}%

Figure~\ref{fig-motifs-optimality} empirically validates the geometric
extremality assumptions underlying the motif analysis. For each degree
\(n=1,\dots,6\), hub--spoke motifs are constructed at \(k=100\) with
\(r=50\); the hub displacement angle is swept over \(121\) grid points
in \([-60^\circ,60^\circ]\) relative to the bisector, with \(50\) random
spoke-angle perturbations per grid point. For all degrees, maximal
sensitivity is attained at the canonical configurations identified
analytically, and angular deviations strictly reduce the induced
weighted-degree change. This confirms that the witness motifs used
throughout the paper are not artefacts of a particular alignment but
represent genuine local maxima under the admissible geometric
constraints.

\subsection{Spectral tightness of witness
motifs}\label{sec-motif-tightness-results}

Table~\ref{tbl-degree-summary} reports maximal weighted-degree
perturbations for each witness motif in the extremal regime (\(k=2r\),
\(\varepsilon\to 0\), \(r=1\)), together with the corresponding observed
maximum Laplacian eigenvalue shift and Weyl upper bound
(\(\|L'-L\|_2\)); the ratio column gives the tightness of the Weyl bound
(shift/bound). Low-degree motifs (degrees 1 and 2) exhibit numerical
Weyl tightness (ratio \(=1\)), while higher-degree motifs show
increasing slack, indicating that spectral misalignment---rather than
noise magnitude alone---limits tightness. These results justify the
motif ranking used in the packing-based fragility bounds and support the
use of low-degree motifs as sharp spectral witnesses in practice.

\begin{longtable}[]{@{}
  >{\raggedright\arraybackslash}p{(\linewidth - 8\tabcolsep) * \real{0.1739}}
  >{\raggedright\arraybackslash}p{(\linewidth - 8\tabcolsep) * \real{0.2391}}
  >{\raggedright\arraybackslash}p{(\linewidth - 8\tabcolsep) * \real{0.2391}}
  >{\raggedright\arraybackslash}p{(\linewidth - 8\tabcolsep) * \real{0.1957}}
  >{\raggedright\arraybackslash}p{(\linewidth - 8\tabcolsep) * \real{0.1522}}@{}}
\caption{Maximum relative change in weighted degree for maximally
affected subgraphs under \ac{CTPL} noise, together with observed
spectral displacement and Weyl bounds. Here \(r\) denotes \(r_{\max}\)
(the extremal regime sets \(k = 2r_{\max}\),
\(\varepsilon \to 0\)).}\label{tbl-degree-summary}\tabularnewline
\toprule\noalign{}
\begin{minipage}[b]{\linewidth}\raggedright
Degree
\end{minipage} & \begin{minipage}[b]{\linewidth}\raggedright
\(\delta\) WD \(\varepsilon \to 0\)
\end{minipage} & \begin{minipage}[b]{\linewidth}\raggedright
Actual Shift
\end{minipage} & \begin{minipage}[b]{\linewidth}\raggedright
Weyl Limit
\end{minipage} & \begin{minipage}[b]{\linewidth}\raggedright
Ratio
\end{minipage} \\
\midrule\noalign{}
\endfirsthead
\toprule\noalign{}
\begin{minipage}[b]{\linewidth}\raggedright
Degree
\end{minipage} & \begin{minipage}[b]{\linewidth}\raggedright
\(\delta\) WD \(\varepsilon \to 0\)
\end{minipage} & \begin{minipage}[b]{\linewidth}\raggedright
Actual Shift
\end{minipage} & \begin{minipage}[b]{\linewidth}\raggedright
Weyl Limit
\end{minipage} & \begin{minipage}[b]{\linewidth}\raggedright
Ratio
\end{minipage} \\
\midrule\noalign{}
\endhead
\bottomrule\noalign{}
\endlastfoot
1 & \(2r\) & 4.0000 & 4.0000 & 1.0000 \\
2 & \(3.82r\) & 5.7279 & 5.7279 & 1.0000 \\
3 & \(5.29r\) & 7.0653 & 7.0768 & 0.9984 \\
4 & \(6.29r\) & 7.9030 & 7.9523 & 0.9938 \\
5 & \(6.76r\) & 8.2030 & 8.3373 & 0.9839 \\
6 & \(6.77r\) & 8.0702 & 8.3460 & 0.9670 \\
\end{longtable}

\section{Discussion and Conclusions}\label{sec-discussion}

This work formalises vertex noise in geometric graphs through calibrated
tempered power-law (CTPL) perturbations and introduces two complementary
stochastic spectral quantities: stochastic co-spectrality (\ac{SC}),
which captures the distributional footprint of spectral distances under
noise, and the stochastic spectral separation index (\ac{S3I}), which
summarises the distinguishability of noise regimes from spectral data
alone. Together with motif-based perturbation analysis, these quantities
provide a principled bridge between local geometric noise mechanisms and
global spectral effects.

A central insight of the paper is that vertex noise induces structured,
correlated, and geometry-dependent edge perturbations that cannot, in
general, be reduced to independent edge-noise models. This motivates the
use of local geometric witness motifs as the correct scale at which
worst-case and aggregate spectral effects concentrate. By combining
sharp local extremal analysis with Weyl-type and Frobenius bounds, we
obtain tractable estimates of global spectral fragility that depend
explicitly on motif frequency and degree structure rather than on opaque
global constants. The resulting framework cleanly separates where
fragility originates (local geometry) from how it propagates (spectral
perturbation theory).

From an algorithmic perspective, the greedy motif-tiling strategy used
here is deliberately simple. The associated optimal tiling problem is
NP-hard, so exact solutions are infeasible at scale; nevertheless, the
empirical results indicate that even coarse tilings capture the dominant
contributors to spectral instability. Designing faster or
approximation-guaranteed motif extraction schemes is a natural direction
for future work. The theoretical framework---Weyl-type bounds, Frobenius
decomposition, and the motif-packing strategy---holds in any dimension
under assumptions (A1)--(A4), but the specific witness catalogue and
empirical tightness constants (Table~\ref{tbl-degree-summary}) are
derived and validated in two dimensions. Extending the motif catalogue
and sharp constants to higher-dimensional settings, where spatial
saturation constraints tighten and the family of extremal configurations
diversifies, will likely require new structural insights.

While our analysis is restricted to norm-induced distances, a
significant part of the community works with similarity-based metrics,
where proximity increases, rather than decreases the weight of an edge.
Due to space limitations we cannot fully explore this direction here.
Extending the present bounds to similarity kernels is non-trivial:
decreasing or near-singular kernels (e.g.~inverse-distance) amplify
small post-noise separations, breaking the uniform perturbation
envelopes used above unless additional post-noise separation or kernel
regularization is imposed. The obstruction is therefore analytic rather
than computational. We note that our accompanying open-source packages
(Section~\ref{sec-algorithms}) support arbitrary kernels, so
practitioners can empirically assess the impact of their chosen kernel
on spectral fragility and separability in their own settings.

Beyond methodology, the framework has direct relevance for spatial
networks arising in imaging and biological reconstruction, such as
mitochondrial and endoplasmic reticulum networks, filament skeletons,
and single-molecule localisation graphs. In these settings,
perturbations are naturally vertex-local, heavy-tailed, and
geometry-constrained. The quantities introduced here---\ac{SC} and
\ac{S3I}---offer interpretable summaries of spectral robustness and
separability that remain meaningful even when the underlying noise
distributions are non-Gaussian and only weak-oracle observations are
available. In particular, estimating the noise-induced spectral envelope
provides a computable noise floor that can replace domain-specific
simulations or edge-independent approximations, enabling practitioners
to delineate genuine structural change from localisation-induced
spectral artefacts and thereby reduce both false positives and overly
conservative false-negative thresholds. The extension to unconstrained
embeddings via repair ensures this capability is not limited to
biophysically motivated networks, and the accompanying open-source
packages provide domain experts with ready-to-use tooling for spectral
noise quantification on their own geometric configurations.

A longer-term implication is the possibility of inversion: given noisy
spectral observations and partial geometric priors, can one constrain or
recover parameters of the underlying vertex-noise process using
motif-level approximations and stochastic spectral summaries? Addressing
such inverse problems, alongside scalable motif detection in higher
dimensions, would further connect the present theory to data-driven
applications. Taken together, the results of this work establish
motif-aware stochastic spectral analysis as a viable and flexible lens
for studying vertex noise in geometric graphs.

\section{References}\label{references}

\renewcommand{\bibsection}{}

\input{paper.bbl}
\section*{Acknowledgments}
F.S.\ and B.C.\ acknowledge funding from a UKRI Future Leaders Fellowship MR/T043571/1.

\newpage

\section{Supplementary Material
(Appendix)}\label{supplementary-material-appendix}

\subsection{Notation reference}\label{sec-notation}

{\footnotesize

\begin{longtable}[]{@{}
  >{\raggedright\arraybackslash}p{(\linewidth - 4\tabcolsep) * \real{0.2424}}
  >{\raggedright\arraybackslash}p{(\linewidth - 4\tabcolsep) * \real{0.3939}}
  >{\raggedright\arraybackslash}p{(\linewidth - 4\tabcolsep) * \real{0.3636}}@{}}
\caption{Principal notation used in this paper. Standard graph-theoretic
symbols (\(G\), \(V\), \(E\), \(A\), \(D\), \(L\)) follow the
definitions in
Section~\ref{sec-defs}.}\label{tbl-notation}\tabularnewline
\toprule\noalign{}
\begin{minipage}[b]{\linewidth}\raggedright
Symbol
\end{minipage} & \begin{minipage}[b]{\linewidth}\raggedright
Description
\end{minipage} & \begin{minipage}[b]{\linewidth}\raggedright
Defined in
\end{minipage} \\
\midrule\noalign{}
\endfirsthead
\toprule\noalign{}
\begin{minipage}[b]{\linewidth}\raggedright
Symbol
\end{minipage} & \begin{minipage}[b]{\linewidth}\raggedright
Description
\end{minipage} & \begin{minipage}[b]{\linewidth}\raggedright
Defined in
\end{minipage} \\
\midrule\noalign{}
\endhead
\bottomrule\noalign{}
\endlastfoot
\(k\) & Ground-truth edge length (strong oracle) &
Section~\ref{sec-graphmodel}, Figure~\ref{fig-graphmodel} \\
\(k'\) & Noise-perturbed edge length (weak oracle) &
Section~\ref{sec-tailweights} \\
\(k_{\min}\) & Minimum admissible edge length, \(2r_{\max}+\varepsilon\)
& Section~\ref{sec-graphmodel} (A2), Figure~\ref{fig-graphmodel} \\
\(\varepsilon\) & Clearance / \(\varepsilon\)-thickness separation &
Section~\ref{sec-graphmodel} (A1) \\
\(r_{\max}\) & Maximum admissible jump radius (clipping bound) &
Section~\ref{sec-ctpl} \\
\(r_a, r_b\) & Radial displacement magnitudes per vertex &
Figure~\ref{fig-graphmodel} \\
\(c\) & Scale parameter of the \ac{CTPL} distribution &
Section~\ref{sec-levy-base}, Section~\ref{sec-ctpl} \\
\(\lambda\) & Exponential tempering parameter &
Section~\ref{sec-ctpl} \\
\(\delta\) & Tail tolerance for calibration & Section~\ref{sec-ctpl} \\
\(N_D\) & Doubling constant of the ambient metric &
Section~\ref{sec-graphmodel} \\
\(\varphi\) & Radial weight kernel (edge weight as function of distance)
& Section~\ref{sec-degreeproof} \\
\(\rho\) & Inter-vertex distance (scalar argument to weight function) &
Section~\ref{sec-graphmodel} (A4) \\
\(w_{\max}, L_w\) & Weight bound and Lipschitz constant of \(\varphi\) &
Section~\ref{sec-graphmodel} (A4) \\
\(\theta\) & Minimum angular separation between neighbours at a hub &
Section~\ref{sec-graphmodel} (A2) \\
\(\tau_d\) & Kissing number in dimension \(d\) (\(\tau_2=6\)) &
Section~\ref{sec-degreeproof} \\
\(\mathrm{WD}_G(v)\) & Weighted degree of vertex \(v\) &
Section~\ref{sec-degreeproof} \\
\(\delta_{\mathrm{rel}}\mathrm{WD}(v)\) & Relative weighted-degree
change under perturbation & Section~\ref{sec-degreeproof} \\
\(C_n\) & Geometric sensitivity coefficient for degree-\(n\) motif &
Section~\ref{sec-degreeproof} \\
\(G_n\) & Hub--spoke witness motif of degree \(n\) &
Section~\ref{sec-degreeproof} \\
\(d_{\mathrm{SD}}\) & Normalised spectral distance (Wasserstein-2 based)
& Section~\ref{sec-defs} \\
\(S_2(L,L')\) & \(\ell_2\) spectral shift &
Section~\ref{sec-repair-perturbation} \\
\(\mathrm{SC}_k\) & Stochastic co-spectrality (\(k\)-th moment of
\(d_{\mathrm{SD}}\)) & Section~\ref{sec-scssi} \\
\(\mathrm{S3I}\) & Stochastic spectral separation index &
Section~\ref{sec-scssi} \\
\(S(H)\) & Sum of squared edge-weight changes on graph \(H\) &
Section~\ref{sec-repair-perturbation} \\
\(\Delta(H)\) & Maximum vertex degree in graph \(H\) &
Section~\ref{sec-repair-perturbation} \\
\end{longtable}

}

\subsection{Algorithms and reproduction}\label{sec-algorithms}

Source code supporting this paper is structured as 2 Julia packages and
supporting scripts:

\begin{itemize}
  \item \href{https://github.com/systems-mechanobiology/DynamicGeometricGraphs.jl}{DynamicGeometricGraphs.jl}:
    geometric graph constructions, repair algorithms, motif detection and tiling.\\
    {\small\url{https://github.com/systems-mechanobiology/DynamicGeometricGraphs.jl}}
  \item \href{https://github.com/systems-mechanobiology/CalibratedTemperedPowerLaw.jl}{CalibratedTemperedPowerLaw.jl}:
    implementation of the \ac{CTPL} distribution, sampling algorithms, S3I, SC metrics, and noise perturbation routines.\\
    {\small\url{https://github.com/systems-mechanobiology/CalibratedTemperedPowerLaw.jl}}
  \item \href{https://github.com/systems-mechanobiology/GeometricVertexNoiseSpectra}{GeometricVertexNoiseSpectra}:
    scripts to reproduce figures and experiments in the paper, including the geometric motifs and maximal configurations and angular maximizers.\\
    {\small\url{https://github.com/systems-mechanobiology/GeometricVertexNoiseSpectra}}
\end{itemize}

\begin{figure}[htbp]

\centering{

\pandocbounded{\includegraphics[keepaspectratio]{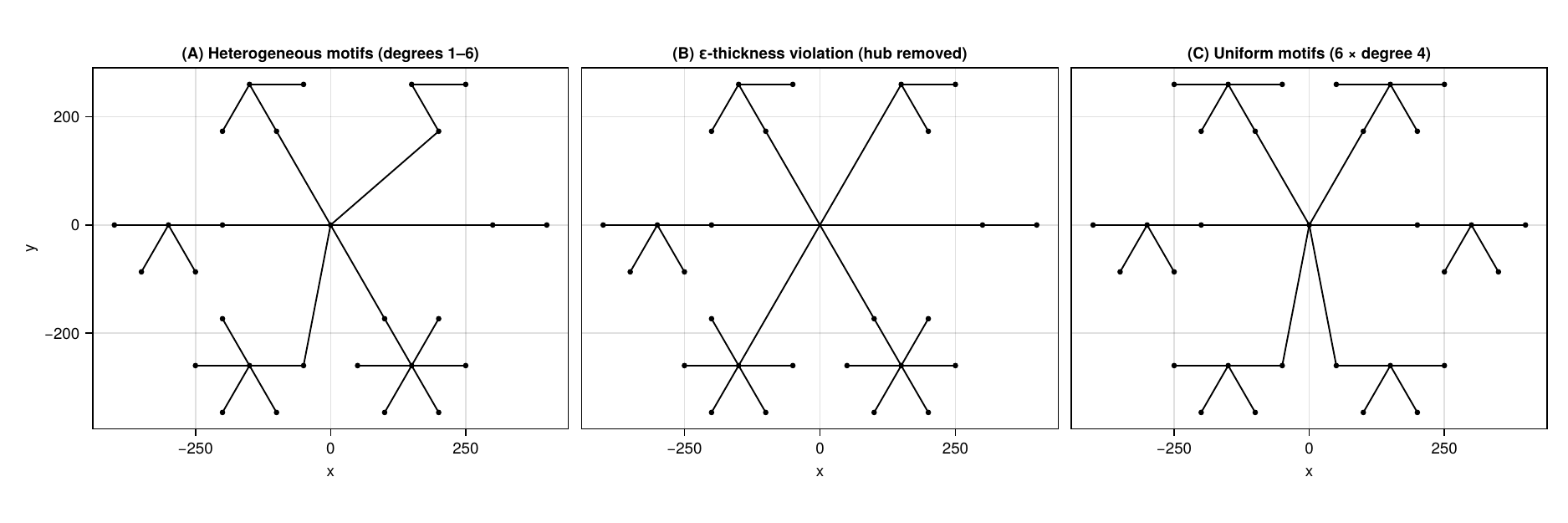}}

}

\caption{\label{fig-motifgraph}Graph constructions used for
motif-frequency and repair experiments. Three geometric graph
constructions sharing the same spatial scaffold (edge length \(k=100\),
inter-motif spacing \(300\); arbitrary units). (A) Motif-complete
reference graph realizing all six witness motifs under the
\(\varepsilon\)-thickness constraint, used as the baseline construction
in Figure~\ref{fig-s3-results}. (B) Variant violating the
\(\varepsilon\)-thickness constraint by removal of the central hub,
illustrating the geometric collapse excluded by the model assumptions.
(C) Frequency-controlled construction with uniform motif composition
(six copies of the degree-4 motif), used to isolate the effect of motif
frequency independently of geometry. Exact construction parameters are
given in the reproduction scripts.}

\end{figure}%

\SetAlFnt{\scriptsize}
\SetAlCapFnt{\scriptsize}
\SetAlCapNameFnt{\scriptsize}
\setlength{\algomargin}{0.5em}
\SetAlgoSkip{}
\begin{algorithm2e}[H]
\caption{Sample from \ac{CTPL} distribution}
\label{alg:sample-tl}
\KwIn{Scale $c > 0$, tempering $\lambda \geq 0$, location $\mu$, sample limit $L$}
\KwOut{Sample $x$ from $\mathrm{CTPL}(c, \lambda, \mu)$, $x \leq L$}
\While{True}{
    Draw $z \sim \mathcal{N}(0, 1)$;\\
    $x_{\text{cand}} \leftarrow \mu + \frac{c}{z^2}$;\\
    Compute proposal PDF: $p_{\text{prop}} \leftarrow \mathrm{LevyPDF}(x_{\text{cand}}, c, \mu)$;\tcp*{Sec~2.2.1}
    \If{$p_{\text{prop}} \leq 0$ or not finite}{continue;}
    Compute target PDF: $p_{\text{target}} \leftarrow \mathrm{TemperedLevyPDF}(x_{\text{cand}}, c, \lambda, \mu)$;\tcp*{Sec~2.2.2}
    $r \leftarrow \frac{p_{\text{target}}}{p_{\text{prop}}}$;\\
    Draw $u \sim \mathrm{Uniform}(0, 1)$;\\
    \If{$u < r$}{return $\min(L, x_{\text{cand}})$;}
}
\end{algorithm2e}

\begin{algorithm2e}[H]
\caption{Autotune $\lambda$ for Tail Probability}
\label{alg:autotune-lambda}
\KwIn{Scale $c$, location $\mu$, sample limit $L$, target tail probability $\delta$, tolerance $\epsilon$, max iterations $N$}
\KwOut{$\lambda$ such that $P(X > L) < \delta$}
Set $\lambda_{\text{low}} \leftarrow 0$, $\lambda_{\text{high}} \leftarrow 2$ (sufficient for the parameter ranges considered here);\\
\For{$i \leftarrow 1$ to $N$}{
    $\lambda_{\text{mid}} \leftarrow (\lambda_{\text{low}} + \lambda_{\text{high}})/2$;\\
    $p_{\text{tail}} \leftarrow \text{estimate-tail}(\lambda_{\text{mid}}, \mu, c, M)$;\\
    \If{$|p_{\text{tail}} - \delta| < \epsilon$}{return $\lambda_{\text{mid}}$;}
    \ElseIf{$p_{\text{tail}} > \delta$}{$\lambda_{\text{low}} \leftarrow \lambda_{\text{mid}}$;}
    \Else{$\lambda_{\text{high}} \leftarrow \lambda_{\text{mid}}$;}
}
\KwRet{best $\lambda_{\mathrm{mid}}$}\;
\end{algorithm2e}

\begin{algorithm2e}[H]
\caption{Estimate Tail Probability by Monte Carlo}
\label{alg:estimate-tail}
\KwIn{Scale $c$, tempering $\lambda$, location $\mu$, threshold $L$, number of samples $M$}
\KwOut{Estimated tail probability $p_{\text{tail}}$}
$count \leftarrow 0$;\\
\For{$i \leftarrow 1$ to $M$}{
    Draw $x \sim \mathrm{CTPL}(c, \lambda, \mu)$;\\
    \If{$x > L$}{$count \leftarrow count + 1$;}
}
$p_{\text{tail}} \leftarrow count / M$;\\
return $p_{\text{tail}}$;
\end{algorithm2e}

\begin{algorithm2e}[H]
\caption{Perturb 2D Vertex by \ac{CTPL} Noise}
\label{alg:samplelevy2d}
\KwIn{Coordinates $x, y \in \mathbb{R}^2$, scale $c$, limit $r_{\max}$, tolerance $\delta$}
\KwOut{Perturbed coordinates $x', y'$}
Tune $\lambda$ using Algorithm~\ref{alg:autotune-lambda} so that $P[r \sim \mathrm{CTPL}(c, \lambda) \geq r_{\max}] < \delta$;\\
Draw $r \sim \mathrm{CTPL}(c, \lambda)$;\\
Draw $\theta \sim \mathrm{Uniform}[0, 2\pi)$;\\
$x' \leftarrow x + r \cos \theta$;\\
$y' \leftarrow y + r \sin \theta$;\\
return $(x', y')$;
\end{algorithm2e}

\begin{algorithm2e}[H]
    \caption{Motif-Extractor: Greedy selection of disjoint fragile motifs. Motifs are sorted by Table~\ref{tbl-degree-summary}: process largest $C_d$}
    \label{alg:motif_extractor}

    \KwIn{$G=(V,E)$ (graph), $D$ (degree vector), $B$ (binary adjacency matrix), $\text{degree\_priority} = \{6, 5, 4, 3, 2, 1\}$}
    \KwOut{$\mathcal{M}$ (set of disjoint motif hub vertices)}

    \BlankLine

    $\mathcal{M} \gets \emptyset$\;
    $\text{Covered} \gets \emptyset$\;

    \ForEach{$d$ in $\text{degree\_priority}$}{
        $\text{candidates} \gets \{v \in V \mid D[v] = d\}$\;
        \ForEach{$v$ in $\text{candidates}$}{
            \If{$v \notin \text{Covered}$}{
                $M_{\text{cand}} \gets \{v\} \cup \{u \in V \mid B[v, u] = 1\}$\;
                \If{$M_{\text{cand}} \cap \text{Covered} = \emptyset$}{
                    $\mathcal{M} \gets \mathcal{M} \cup \{v\}$\;
                    $\text{Covered} \gets \text{Covered} \cup M_{\text{cand}}$\;
                }
            }
        }
    }
    \KwRet{$\mathcal{M}$}\;
\end{algorithm2e}

\begin{algorithm2e}[H]
\caption{$\varepsilon$-Thickness Repair}
\label{alg:thick-repair}
\KwIn{Graph $G = (V, E)$ embedded in $\mathbb{R}^d$, separation threshold $\varepsilon > 0$}
\KwOut{Repaired graph $\widetilde G = (V \cup V_s, \widetilde E)$ satisfying $\varepsilon$-thickness}

$Q \gets E$;\\

\While{$Q \neq \emptyset$}{
    $e = (u, w) \gets Q.\textsc{Dequeue}()$\;

    $V_{\text{close}} \gets \{v \in V \setminus \{u,w\} : d(v, \text{segment}(e)) < \varepsilon\}$\;

    \eIf{$V_{\text{close}} \neq \emptyset$}{
        Choose $v \in V_{\text{close}}$ minimizing $d(v, \text{segment}(e))$\;

        $E \gets E \setminus \{(u,w)\}$\;
        $E \gets E \cup \{(u,v), (v,w)\}$\;

        $Q.\textsc{Enqueue}((u,v))$\;
        $Q.\textsc{Enqueue}((v,w))$\;
    }{
        \tcp{Edge $e$ satisfies thickness constraint, keep it}
    }
}

\Return{$\widetilde G = (V \cup V_s, \widetilde E)$}
\end{algorithm2e}

\subsection{Proof that optimal motif tiling is
NP-hard}\label{sec-np-hard-proof}

We formalise the optimal motif tiling problem and relate it to the
maximum weight independent set (MWIS) problem on a derived conflict
graph.

Let \(G = (V,E)\) be a fixed graph and let
\(\mathcal{M} = \{M_1,\dots,M_m\}\) be a finite family of candidate
motifs, where each motif is a vertex subset \(M_i \subseteq V\). A
feasible motif tiling is a subcollection
\(\mathcal{T} \subseteq \mathcal{M}\) such that the motifs are
vertex-disjoint, i.e., \(M_i \cap M_j = \varnothing\) for all
\(M_i, M_j \in \mathcal{T}\), \(i \ne j\). We define the coverage of a
tiling as the total number of vertices covered,
\(\mathrm{cov}(\mathcal{T}) = |\bigcup_{M_i \in \mathcal{T}} M_i| = \sum_{M_i \in \mathcal{T}} |M_i|\),
where the last equality uses disjointness. The optimal motif tiling
problem is to maximise \(\mathrm{cov}(\mathcal{T})\) subject to
\(\mathcal{T} \subseteq \mathcal{M}\) and
\(M_i \cap M_j = \varnothing\).

From \(\mathcal{M}\) we build a conflict graph \(H = (V_H, E_H)\) as
follows: for each motif \(M_i \in \mathcal{M}\), introduce a vertex
\(v_i \in V_H\); connect \(v_i\) and \(v_j\) by an edge in \(H\)
whenever the corresponding motifs overlap, i.e.,
\(\{v_i, v_j\} \in E_H \iff M_i \cap M_j \ne \varnothing\); and assign
each vertex \(v_i\) a non-negative weight \(w_i = |M_i|\).

An independent set \(I \subseteq V_H\) is a set of vertices with no
internal edges. By construction, \(I\) is independent if and only if
\(\mathcal{T}_I = \{M_i : v_i \in I\}\) is a feasible tiling. The weight
of an independent set \(I\) is
\(w(I) = \sum_{v_i \in I} w_i = \sum_{M_i \in \mathcal{T}_I} |M_i| = \mathrm{cov}(\mathcal{T}_I)\).
Thus the optimal motif tiling problem is equivalent to the maximum
weight independent set problem on \(H\):
\(\max_{\mathcal{T}\ \text{feasible}} \mathrm{cov}(\mathcal{T}) \Longleftrightarrow \max_{I\ \text{independent in } H} w(I)\).

The maximum weight independent set problem is NP-hard on general graphs.
Because any instance of MWIS can be represented as the conflict graph
\(H\) of a suitable motif family (via the construction above), solving
the optimal motif tiling problem is at least as hard as solving MWIS.
Consequently, the optimal motif tiling problem is NP-hard in general.

\textbf{AI usage disclaimer.} During the preparation of this manuscript,
we used generative AI tools to assist with implementation tasks (e.g.,
boilerplate code generation and plotting scripts), routine code
screening, and publication logistics (Quarto setup and formatting). All
substantive intellectual contributions, including problem formulation,
theory development, proofs, interpretation of results, and the final
textual content, are the result of human scholarly effort. The authors
take full responsibility for the accuracy and integrity of the final
manuscript.

\end{document}

%% file: Abbreviations.tex
\DeclareAcronym{SC}{
    short = {SC},
    long = {stochastic co-spectrality},
    tag = {abbrev},
}
\DeclareAcronym{S3I}{
    short = {S3I},
    long = {stochastic spectral separation index},
    tag = {abbrev},
}
\DeclareAcronym{TT}{
    short = {TT},
    long = {tempered tail},
    tag = {abbrev},
}
\DeclareAcronym{CTPL}{
    short = {CTPL},
    long = {calibrated tempered power-law},
    tag = {abbrev},
}